\documentclass[11pt,letterpaper,reqno]{amsart}

\usepackage{amssymb,amsmath,amsthm,amsfonts}

\addtolength{\hoffset}{-2cm} \addtolength{\textwidth}{4cm}
\addtolength{\voffset}{-1cm} \addtolength{\textheight}{2cm}

\newtheorem{theorem}{Theorem}[section]
\newtheorem{lemma}[theorem]{Lemma}

\numberwithin{equation}{section}

\usepackage{bookmark}
\usepackage{hyperref}
\hypersetup{pdfstartview={FitH}}

\begin{document}

\title{Convergence of lacunary SU(1,1)-valued trigonometric products}

\author[J. Rup\v{c}i\'{c}]{Jelena Rup\v{c}i\'{c}}
\address{Jelena Rup\v{c}i\'{c}, Faculty of Transport and Traffic Sciences, University of Zagreb, Vukeli\'{c}eva 4, 10000 Zagreb, Croatia}
\email{jelena.rupcic@fpz.hr}

\date{\today}

\subjclass[2010]{
Primary 42A55; % Lacunary series of trigonometric and other functions; Riesz products
Secondary 40A20} % Convergence and divergence of infinite products
\keywords{Nonlinear Fourier analysis, matrix product, lacunary trigonometric series, convergence in mean, convergence almost everywhere.}

\begin{abstract}
This note attempts to study lacunary trigonometric products with values in the matrix group $\textup{SU}(1,1)$ in analogy with lacunary trigonometric series. The central questions are the characterization of their convergence in an appropriately defined $\textup{L}^p$-metric and the characterization of their convergence almost everywhere. These can be interpreted as nonlinear analogues of the classical results by Zygmund and Kolmogorov.
\end{abstract}

\maketitle

%%%%%%%%%%%%%%%%%%%%%%%%%%%%%%%%%%%%%%%%%%%%%%%%

\section{Introduction}

Throughout the text $(A_n)_{n\in\mathbb{Z}}$ will always be a sequence of positive numbers and $(B_n)_{n\in\mathbb{Z}}$ will be a sequence of complex numbers satisfying $A_n^2-|B_n|^2=1$ for every $n\in\mathbb{Z}$. For each $t$ from the one-dimensional torus $\mathbb{T}\equiv\mathbb{R}/\mathbb{Z}$ we can consider the infinite product of matrices
\begin{equation}\label{eq:nftdef}
\prod_{n=-\infty}^{\infty} \begin{bmatrix} A_n & B_n e^{2\pi i n t} \\ \overline{B_n} e^{-2\pi i n t} & A_n \end{bmatrix},
\end{equation}
which we call the \emph{$\textup{SU}(1,1)$ trigonometric product} with the coefficients $A_n$, $B_n$ or the \emph{discrete-time $\textup{SU}(1,1)$ nonlinear Fourier transform} of the sequence of pairs $(A_n,B_n)$. The name comes from the fact that the matrices appearing in \eqref{eq:nftdef} lie in the group
\[ \textup{SU}(1,1) := \left\{ \begin{bmatrix} A & B \\ \overline{B} & \overline{A} \end{bmatrix} \,:\, A,B\in\mathbb{C},\ |A|^2-|B|^2=1 \right\}. \]
Convergence of the infinite product \eqref{eq:nftdef} is interpreted in the sense $\lim_{N\to\infty}\prod_{n=-N}^{N}$. Similarly as with trigonometric series, one can study various modes of converges and impose sufficient and/or necessary conditions on the coefficients $A_n$, $B_n$ for each of these modes. It is important to perform the multiplications in the right order, since in general the above matrices do not commute.

Infinite products \eqref{eq:nftdef} appear in the study of orthogonal polynomials on the unit circle; see the extensive books by Simon \cite{S05a,S05b}. A self-contained introduction to their theory and deduction of their basic properties can be found in the lecture notes by Tao and Thiele \cite{TT03}. The continuous-time analogue of \eqref{eq:nftdef} is the so-called \emph{Dirac scattering transform} (see \cite{MTT03}), arising from the eigenproblem for the Dirac operator. The latter transform is also a particular case of the AKNS systems introduced in \cite{AKNS74} and \cite{ZS71}.

An alternative normalization preferred in \cite{TT03} is
\begin{equation}\label{eq:coeff}
A_n = \frac{1}{\sqrt{1-|F_n|^2}},\quad B_n = \frac{F_n}{\sqrt{1-|F_n|^2}},
\end{equation}
where $(F_n)_{n\in\mathbb{Z}}$ is now a sequence of complex numbers in the open unit disk. We will say that the infinite product \eqref{eq:nftdef} \emph{has $\ell^p$ coefficients} for some $0<p<\infty$ if either of the two mutually equivalent conditions
\[ \sum_{n\in\mathbb{Z}} |B_n|^p < \infty,\quad \sum_{n\in\mathbb{Z}} |F_n|^p < \infty \]
is satisfied. Moreover, for $p=2$ this is also easily seen to be equivalent with any of the conditions
\[ \sum_{n\in\mathbb{Z}} \log(A_n^2+|B_n|^2) < \infty,\quad \sum_{n\in\mathbb{Z}} \log A_n < \infty,\quad
\prod_{n\in\mathbb{Z}} (A_n^2+|B_n|^2) < \infty,\quad \prod_{n\in\mathbb{Z}} A_n < \infty. \]

The analogy between the trigonometric product \eqref{eq:nftdef} and the trigonometric series
\begin{equation}\label{eq:trigser}
\sum_{n=-\infty}^{\infty} D_n e^{2\pi i n t}
\end{equation}
is more than just formal. For instance, Tao and Thiele \cite{TT03} modified the  continuous-time approach of Christ and Kiselev \cite{CK01a,CK01b} to show that the product \eqref{eq:nftdef} with $\ell^p$ coefficients for $p<2$ converges a.e.\@ on $\mathbb{T}$. This can be viewed as a nonlinear analogue of the classical result for the series \eqref{eq:trigser} attributed to Menshov, Paley, and Zygmund \cite{Z59}. An alternative and more quantitative proof of this fact was given by Oliveira e Silva \cite{O17}. It is known that the trigonometric series \eqref{eq:trigser} with $\ell^2$ coefficients also converges a.e.; this is the celebrated result of Carleson \cite{C66}. However the same is only conjectured for the matrix product \eqref{eq:nftdef} with $\ell^2$ coefficients; see the papers \cite{CK01b,K12,KOR19,MTT03,MTT12} for discussions of that one and related open problems. 

%Also, for the matrix product \eqref{eq:nftdef} with $\ell^2$ coefficients we have a nonlinear version of the Plancharel equality that states, \begin{equation}\label{nelin_parseval}
%	\left\| \left( \log|a_{\infty}|\right)^{1/2}\right\|_{\textup L^2(\mathbb T)}=\left\| \left( \log A_n\right)^{1/2}\right\|_{\ell^2(\mathbb Z)},
%	\end{equation} where for $t\in\mathbb T$ we use the notation \begin{equation*}
%	\begin{bmatrix}
%		a_{\infty}(t) & b_{\infty}(t) \\
%	\overline{b_{\infty}(t)} & \overline{a_{\infty}(t)} 
%	\end{bmatrix}:=\prod_{n=-\infty}^{\infty}  \begin{bmatrix}
%		A_n & B_ne^{2\pi int} \\
%		\overline{B_n}e^{-2\pi int} & A_n
%	\end{bmatrix}.\end{equation*}

%For simplicity let us assume that all but finitely many coefficients $B_n$ are equal to $0$, which implies that all but finitely many coefficients $A_n$ are equal to $1$. Then \eqref{eq:nftdef} becomes the finite product of matrices and we denote it by
%\[ \begin{bmatrix} a(t) & b(t) \\ \overline{b(t)} & \overline{a(t)} \end{bmatrix}. \]
%Substituting $z=e^{2\pi i t}$, extending to the complex plane, and using contour integration one can easily show the nonlinear version of Parseval's identity,
%\[ \int_{\mathbb{T}} \log|a(t)| \,\textnormal dt = \sum_n \log A_n. \]
%See \cite[\S{}2.1]{TT03} for the proof of this formula that traces back to Verblunsky \cite[pp.291]{V35}.
%Regarding the continuous analogues, it was already emphasized by Ablowitz, Kaup, Newell, and Segur \cite{AKNS74} that scattering theory can be viewed as a nonlinear analogue of the Fourier analysis.

The present note attempts to initiate the study of lacunary versions of \eqref{eq:nftdef}. A strictly increasing sequence of positive integers $(m_j)_{j=1}^{\infty}$ is said to be \emph{$q$-lacunary} for some $q>1$ if $m_{j+1}\geq q m_j$ for each $j$. In that case we can define the \emph{associated $q$-lacunary $\textup{SU}(1,1)$ trigonometric product} as the infinite product of matrices
\begin{equation}\label{eq:lacnftdef}
\prod_{j=1}^{\infty} \begin{bmatrix} A_j & B_j e^{2\pi i m_j t} \\ \overline{B_j} e^{-2\pi i m_j t} & A_j \end{bmatrix},
\end{equation}
where the coefficients $A_j>0$, $B_j\in\mathbb{C}$ still satisfy the relation $A_j^2-|B_j|^2=1$ for every $j$. We can again represent them as in \eqref{eq:coeff}, so that indeed $F_j=B_j/A_j$. This setting is related to the work of Golinskii \cite{G04} and Simon \cite{S05b} on sparse Verblunsky coefficients, but here we are concerned with different questions. A lot of work was done on lacunary versions of the trigonometric series,
\begin{equation}\label{eq:lactrigser}
\sum_{j=1}^{\infty} D_j e^{2\pi i m_j t},
\end{equation}
throughout the first half of the twentieth century (see Zygmund's book \cite[\S{}V.6]{Z59}) and we would like to develop a parallel theory in the nonlinear setting of the product \eqref{eq:lacnftdef}. Already in the 1930s, Zygmund showed that the lacunary series \eqref{eq:lactrigser} converges in the $\textup{L}^p$-quasinorm for some fixed $0<p<\infty$ if and only if its coefficients form an $\ell^2$ sequence; this is also a consequence of Zygmund's inequality \cite[Thm.\,V.8.20]{Z59}. Moreover, Kolmogorov \cite{K24} proved that every lacunary series \eqref{eq:lactrigser} with $\ell^2$ coefficients converges a.e.\@ (also see \cite[Thm.\,V.6.3]{Z59}). Conversely, Zygmund \cite{Z30} showed that convergence of \eqref{eq:lactrigser} on a set of positive measure implies that the sequence of its coefficients has to belong to $\ell^2$ (also see \cite[Thm.\,V.6.4]{Z59}). These classical results serve as motivation for the present paper.

%%%% do tu ide Vjekino
In order to study convergence of $\textup{SU}(1,1)$ products we first have to choose a metric function on that group. Let us define a function $\rho\colon \textup{SU}(1,1)\times \textup{SU}(1,1)\rightarrow\mathbb R$ by
$$\rho(G_1,G_2) := \log\left( 1+\left\| G_1^{-1}G_2-I_2\right\| _{op}\right) ,$$
where $I_2$ is the 2-dimensional unit matrix and $\|\cdot\|_{op}$ denotes the operator norm (i.e.\@ the spectral norm) of a matrix. It is straightforward to verify that function $\rho$ is a complete metric on $\textup{SU}(1,1)$. The same metric $\rho$ was considered by Oliveira e Silva \cite{O17}. The metric $\rho$ is invariant with respect to the left multiplication, i.e. \begin{equation*}
\rho\left(GG_1, GG_2\right)=\rho\left(G_1, G_2\right), \textnormal{ for } G,G_1,G_2\in\textup{SU}(1,1).
\end{equation*}
Furthermore, let us denote the set of all measurable functions $g\colon\mathbb T\rightarrow \textup{SU}(1,1)$ by $\textnormal M(\mathbb T,\textup{SU}(1,1))$. Hence, $$\textnormal M(\mathbb T,\textup{SU}(1,1)):=\left\lbrace g=\begin{bmatrix}
a & b \\
\overline{b} & \overline{a} 
\end{bmatrix}:a,b\colon\mathbb T\to\mathbb C\textnormal{ measurable},\;|a(t)|^2-|b(t)|^2=1, \textnormal{for each } t\in\mathbb T\right\rbrace $$ and we identify functions that are equal a.e. We can now define $d_p\colon \textnormal M(\mathbb T,\textup{SU}(1,1))\times \textnormal M(\mathbb T,\textup{SU}(1,1))\to\left[0,\infty \right] $, for $p> 0$ as follows. For $p\ge 1$ let $d_p$ be given by $$d_p(g_1,g_2) := \left\|\rho\left( g_1(t),g_2(t)\right)  \right\|_{\textup{L}^p_t(\mathbb T)}.$$
On the other hand, for $0<p<1$ let $d_p$ be given by $$d_p(g_1,g_2) := \left\|\rho\left( g_1(t),g_2(t)\right)  \right\|_{\textup{L}^p_t(\mathbb T)}^p.$$ 
We define $I\colon\mathbb T\to\textup{SU}(1,1)$ with $I(t)=I_2$, for every $t\in \mathbb T$ and for  $p>0$ we denote $$\textup L^p\left( \mathbb T,\textup{SU}(1,1)\right) := \left\lbrace g\in \textnormal M(\mathbb T,\textup{SU}(1,1)):d_p(I,g)<\infty \right\rbrace.$$ Every partial product of \eqref{eq:lacnftdef} now lies in this set. Also, for $g=\begin{bmatrix}
a & b \\
\overline{b} & \overline{a} 
\end{bmatrix}\in\textnormal M(\mathbb T,\textup{SU}(1,1))$ we can write, more explicitly, \begin{equation}\label{eq:formuladp}
d_p(I,g)=\left\|\log\left(1+|a(t)-1|+|b(t)|\right)\right\| _{\textup L_t^p(\mathbb T)} .\end{equation}
It is an easy exercise to verify that $\left( \textup L^p\left( \mathbb T,\textup{SU}(1,1)\right), d_p\right) $ is a complete metric space, for every $p>0$. 

Now we are ready to state our main result about convergence with respect to the above metric of a $q$-lacunary $\textup{SU}(1,1)$ trigonometric product. Recall that this product is said to have $\ell^2$ coefficients if \begin{equation}\label{eq:proofcondition} 
	\sum_{j=1}^{\infty} \log(A_j^2+|B_j|^2) < \infty.\end{equation}
\begin{theorem}\label{Tmconvmetric}
	Let $(m_j)_{j=1}^{\infty}$ be a $q$-lacunary sequence with $q\ge 2$ and take an arbitrary $p>0$. The infinite product \eqref{eq:lacnftdef} converges in the metric space $\left( \textup L^p\left( \mathbb T,\textup{SU}(1,1)\right), d_p\right) $ if and only if it has $\ell^2$ coefficients.
\end{theorem}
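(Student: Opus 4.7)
The plan develops two preliminary identities by induction on $N$, then splits into sufficiency and necessity. Both identities use the recursion
\[ a_N = A_N a_{N-1} + \overline{B_N}e^{-2\pi i m_N t}\, b_{N-1},\qquad b_N = B_N e^{2\pi i m_N t}\, a_{N-1} + A_N b_{N-1}. \]
The $\textup{L}^2$-Plancherel identity
\[ \int_{\mathbb T}\bigl(|a_N|^2+|b_N|^2\bigr)\,dt = \prod_{j=1}^{N}\bigl(A_j^2+|B_j|^2\bigr) \]
follows by expanding the integrand via the recursion and checking that the cross term integrates to zero; this reduces to the vanishing of the Fourier coefficient of $a_{N-1}\overline{b_{N-1}}$ at frequency $\pm m_N$, which I would verify by describing the spectra of $a_{N-1}$ and $b_{N-1}$ as sets of alternating sums $m_{j_1}-m_{j_2}+\cdots$ over subsequences of $(m_j)_{j<N}$ and showing, using $q\geq 2$, that no such combination reaches $\pm m_N$. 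The nonlinear Plancherel
\[ \int_{\mathbb T}\log|a_N(t)|^2\,dt = 2\sum_{j=1}^{N}\log A_j \]
comes from factoring $a_N = A_N a_{N-1}(1 + c_N(t))$ with $|c_N(t)|<1$ and applying Jensen's formula on $\mathbb T$ after the integer change of variables $t\mapsto m_N t$. I note that $\sum\log(A_j^2+|B_j|^2)<\infty$ is equivalent to $\sum\log A_j<\infty$, since $A_j^2+|B_j|^2=2A_j^2-1$.

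For the \emph{sufficiency} direction, assume $\sum\log A_j<\infty$. By left-invariance, $d_p(P_M,P_N)=d_p(I,Q_{M,N})$ where $Q_{M,N}:=\prod_{j=M+1}^{N}M_j$ has entries $a'$, $b'$. The $\textup{L}^2$-Plancherel applied to $Q_{M,N}$, together with $\widehat{a'}(0)=\prod_{j>M}A_j$, yields $\|b'\|_{\textup{L}^2},\,\|a'-1\|_{\textup{L}^2}\to 0$ as $M\to\infty$ uniformly in $N$ (by monotonicity of the tail products), hence $\bigl\|\|Q_{M,N}-I\|_{op}\bigr\|_{\textup{L}^2}\to 0$. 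To pass to $d_p$, I would use the soft inequality
\[ \bigl\|\log(1+X)\bigr\|_{\textup{L}^p(\mathbb T)}\leq C_p\,\|X\|_{\textup{L}^2(\mathbb T)}^{\min(1,\,2/p)}\qquad\text{whenever }\|X\|_{\textup{L}^2}\leq 1, \]
proved by splitting $\mathbb T=\{X\leq 1\}\cup\{X>1\}$ and using $\log(1+X)\leq X$ on the small part and $\log(1+X)\leq c_\varepsilon X^\varepsilon$ with $\varepsilon=2/p$ on the large one. With $X=\|Q_{M,N}-I\|_{op}$ this gives $d_p(I,Q_{M,N})\to 0$, so $(P_N)$ is $d_p$-Cauchy.

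For the \emph{necessity} direction with $p\geq 1$, letting $K:=\sup_N d_p(I,P_N)<\infty$, the nonlinear Plancherel combined with Jensen's inequality on the probability space $\mathbb T$ gives
\[ \sum_{j=1}^{N}\log A_j = \int\log|a_N|\,dt = \tfrac{1}{2}\!\int\log(1+|b_N|^2)\,dt \leq \int\log(1+|a_N-1|+|b_N|)\,dt \leq d_p(I,P_N) \leq K, \]
so by monotonicity $\sum\log A_j$ converges.

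The \emph{main obstacle} is necessity for $0<p<1$, where $\|\cdot\|_{\textup{L}^1}\geq\|\cdot\|_{\textup{L}^p}$ so that the Jensen step above fails. My plan is to prove a Khinchine-type reverse inequality $\|b_N\|_{\textup{L}^4(\mathbb T)}\leq C_q\,\|b_N\|_{\textup{L}^2(\mathbb T)}$ uniformly in $N$; by Paley--Zygmund this forces $|\{|b_N|>\tfrac{1}{2}\|b_N\|_{\textup{L}^2}\}|\geq c_q>0$ uniformly, which against the Chebyshev tail $|\{|b_N|>M\}|\leq K/(\log(1+M))^p$ coming from $d_p(I,P_N)\leq K$ forces $\|b_N\|_{\textup{L}^2}$ to stay bounded in $N$; then the $\textup{L}^2$-Plancherel yields $\prod(A_j^2+|B_j|^2)<\infty$. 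The proof of the Khinchine inequality reduces to estimating the additive energy of the spectrum of $b_N$, which is the set of alternating sums $m_{j_1}-m_{j_2}+\cdots+m_{j_{2k+1}}$ with $j_1<\cdots<j_{2k+1}$; these are all distinct under $q\geq 2$ (the same combinatorial fact used earlier) and I expect their additive energy to be controlled by a uniform constant depending only on $q$, but carefully bounding the number of $4$-tuples $(k_1,k_2,k_3,k_4)$ in this alternating-sum set with $k_1+k_2=k_3+k_4$ is the principal technical difficulty I anticipate.
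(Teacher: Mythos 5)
Your Plancherel identity, your sufficiency argument (the soft inequality $\|\log(1+X)\|_{\textup{L}^p}\le C_p\|X\|_{\textup{L}^2}^{\min(1,2/p)}$ is a clean substitute for the paper's layer-cake estimate, which reaches the same bound $d_p\lesssim_p (e^{S_{M,N}}-1)^{\min(1,1/p)}$ via Chebyshev), and your necessity argument for $p\ge 1$ (nonlinear Parseval plus Jensen, giving $\sum_{j\le N}\log A_j\le d_p(I,P_N)$ directly) are all sound; the last of these is in fact more direct than the paper, which instead reduces everything to $0<p<1$. The genuine gap is exactly where you locate it, but it is worse than a technical difficulty: the key lemma you plan to prove is false. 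A uniform-in-$N$ Khinchine inequality $\|b_N\|_{\textup{L}^4}\le C_q\|b_N\|_{\textup{L}^2}$ cannot hold in the regime your contradiction argument must operate in. Since $|a_N|^2=1+|b_N|^2$, one has $\int|b_N|^2=\tfrac12(e^{S_N}-1)$ with $S_N=\sum_j\log(A_j^2+|B_j|^2)$, while the fourth moment behaves like $e^{c(S_N+\sum_j|B_j|^2)}$: for well-separated (large $q$) frequencies the phases $e^{2\pi i m_jt}$ act like independent rotations, and the moment recursion for $(\int|a_N|^4,\int|b_N|^4,\int|a_Nb_N|^2)$ has top growth rate strictly larger than $(A_j^2+|B_j|^2)^2$ per step, with excess factor roughly $1+4A_j^2|B_j|^2$. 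Hence when $|B_j|\to0$ but $\sum_j|B_j|^2=\infty$ (precisely the hypothesis you assume for contradiction), the ratio $\|b_N\|_{\textup{L}^4}^4/\|b_N\|_{\textup{L}^2}^4$ grows like $e^{c\sum_{j\le N}|B_j|^2}\to\infty$; this is the nonlinear analogue of the log-normal fluctuations of $\log|a_N|$, and it is consistent with the paper's own Lemma \ref{lema_ocjena_Dn-ova}, whose bound $e^{8\sum|B_j|^2}$ is vastly larger than $\|b\|_{\textup{L}^2}^4\approx e^{2S}$ in this regime. Consequently the Paley--Zygmund anti-concentration $|\{|b_N|>\tfrac12\|b_N\|_{\textup{L}^2}\}|\ge c_q$ also fails: the $\textup{L}^2$ mass of $b_N$ is carried by sets of measure tending to $0$, so no contradiction with the Chebyshev tail bound can be extracted this way. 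Relatedly, your hope to control the additive energy of the spectrum using only distinctness of alternating sums under $q\ge2$ cannot succeed, since a uniform energy bound would imply exactly the false Khinchine inequality; even the weighted fourth-moment estimate the paper does prove requires the unique maximally shortened representation, i.e.\ $q\ge3$, while Theorem \ref{Tmconvmetric} is claimed for $q\ge2$.

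The paper closes the case $0<p<1$ by a different mechanism that avoids any spectral analysis of $b_N$: it applies the nonlinear Parseval identity to tail segments $a_{M,N}$, writes $S_{M,N}\le 4\|\sqrt{\log|a_{M,N}|}\|_{\textup{L}^2}^2\le 4\|\log(1+|a_{M,N}-1|+|b_{M,N}|)\|_{\textup{L}^2}^2$, interpolates the $\textup{L}^2$ norm between $\textup{L}^p$ and $\textup{L}^4$ by log-convexity, and bounds the $\textup{L}^4$ factor by $\bigl(C_4(e^{S_{M,N}}-1)\bigr)^{1/4}$ using the same distributional estimate as in the sufficiency direction. Choosing segments with $S_{M_k,N_k}\to1$ (possible because $d_p$-convergence forces the individual terms to $0$) and letting the $\textup{L}^p$ factor tend to $0$ yields the contradiction, uniformly for all $q\ge2$. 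If you want to salvage your own scheme, you would likewise have to run it on tail products $b_{M_k,N_k}$ with $\sum_{j=M_k+1}^{N_k}|B_j|^2\approx1$ (where a fourth-moment bound such as Lemma \ref{lema_ocjena_Dn-ova} is uniform), but that route needs $q\ge3$ and extra combinatorial work, so the interpolation argument is both simpler and stronger here.
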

Results about convergence a.e.\ are typically more difficult. We can establish two such results, which can be viewed as partial analogues of the classical results by Kolmogorov \cite{K24} and Zygmund \cite{Z30}.
\begin{theorem}\label{Tmconvae1}
	Let $(m_j)_{j=1}^{\infty}$ be a $q$-lacunary sequence with $q\ge 2$. Then any infinite product \eqref{eq:lacnftdef} with $\ell^2$ coefficients must converge for a.e.\ $t\in\mathbb T$.
\end{theorem}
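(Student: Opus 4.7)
The plan is to reduce a.e.\ convergence to a maximal estimate on tail products and conclude via Borel--Cantelli; note that Theorem \ref{Tmconvmetric} by itself yields only convergence in measure, so further work is needed. Write $G_N(t):=\prod_{j=1}^{N}M_j(t)$ for the partial products of \eqref{eq:lacnftdef} and $H_{K,L}(t):=\prod_{j=K+1}^{L}M_j(t)$ for their tails, $K<L$. The left-invariance of $\rho$ gives $\rho(G_N(t),G_M(t))=\rho(I_2,H_{N,M}(t))$ and $\rho(G_K(t),G_N(t))=\rho(I_2,H_{K,N}(t))$ for $K<N<M$. Since $(\textup{SU}(1,1),\rho)$ is complete, the sequence $(G_N(t))$ converges at $t$ if and only if
\[
\Phi_K(t):=\sup_{L>K}\rho(I_2,H_{K,L}(t))\longrightarrow 0\quad\text{as }K\to\infty,
\]
so the theorem reduces to showing $\Phi_K\to 0$ almost everywhere.

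The key technical step is a maximal inequality bounding $\int_{\mathbb T}\Phi_K(t)^p\,dt$ by a quantity that vanishes as $K\to\infty$ under the $\ell^2$ hypothesis \eqref{eq:proofcondition}, for instance by a positive power of the tail $\sum_{j>K}\log(A_j^2+|B_j|^2)$. \emph{This is the main obstacle.} The direct triangle bound
\[
\Phi_K(t)\leq\sum_{j>K}\rho(I_2,M_j(t))=\sum_{j>K}\log(A_j+|B_j|)
\]
(noting that $\|M_j(t)-I_2\|_{op}=A_j-1+|B_j|$ is independent of $t$) gives only an $\ell^1$-type sufficient condition $\sum_j|B_j|<\infty$, too strong for the present theorem. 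One must exploit the near-orthogonality of the lacunary frequencies $(m_j)$. A natural strategy is: (i) separate off the finitely many indices with $|B_j|\geq\tfrac12$ (finite by \eqref{eq:proofcondition}), reducing to the regime of small factors $M_j(t)=I_2+X_j(t)+O(|B_j|^2)$, with $X_j$ Fourier-supported on $\{0,\pm m_j\}$; (ii) unfold $H_{K,L}$ into a chronological Dyson-type expansion $I_2+\sum_j X_j+\sum_{i<j}X_iX_j+\cdots$; (iii) apply the classical $\textup{L}^2$ maximal inequality for lacunary trigonometric polynomials to the linear part $\sum_{j>K}X_j(t)$, bounding its maximal function in $\textup{L}^2$ by $\bigl(\sum_{j>K}|B_j|^2\bigr)^{1/2}$; and (iv) use the sparsity of iterated sumsets $\pm m_{j_1}\pm\cdots\pm m_{j_r}$ (guaranteed by $q\geq 2$) to control the $\textup{L}^2$-mass of the $r$-th order term by $C^{r}\bigl(\sum_{j>K}|B_j|^2\bigr)^{r/2}$, which sums over $r$ to a convergent geometric series tending to $0$ with $K$.

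Granted the maximal inequality, the conclusion is standard. Choose a subsequence $K_n\nearrow\infty$ with $\sum_n\int_{\mathbb T}\Phi_{K_n}(t)^p\,dt<\infty$. Chebyshev gives, for every fixed $\epsilon>0$,
\[
\sum_n\bigl|\{t\in\mathbb T:\Phi_{K_n}(t)>\epsilon\}\bigr|\leq\epsilon^{-p}\sum_n\int_{\mathbb T}\Phi_{K_n}(t)^p\,dt<\infty,
\]
so Borel--Cantelli yields $\Phi_{K_n}(t)\to 0$ for a.e.\ $t$. For any such $t$ and any $\epsilon>0$, pick $n$ with $\Phi_{K_n}(t)<\epsilon/2$; then $\rho(G_N(t),G_M(t))<\epsilon$ for every $N,M>K_n$, so $(G_N(t))$ is Cauchy and converges by completeness of $(\textup{SU}(1,1),\rho)$.
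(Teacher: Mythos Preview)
Your overall architecture---reduce to a maximal estimate on $\Phi_K=\sup_{L>K}\rho(I_2,H_{K,L})$, then conclude by Chebyshev and Borel--Cantelli---matches the paper exactly. The divergence, and the gap, is entirely in how the maximal estimate is obtained.

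Your step~(iv) does not deliver what you need. Bounding the $\textup L^2$-mass of the $r$-th Dyson term by $C^r(\sum_{j>K}|B_j|^2)^{r/2}$ controls $\|H_{K,L}-I_2\|_{\textup L^2}$ for each \emph{fixed} $L$; summing over $r$ merely recovers the content of Lemma~\ref{lema1}. It says nothing about $\sup_L|H_{K,L}-I_2|$. The lacunary maximal inequality in step~(iii) handles only the linear term $\sum_{K<j\le L}X_j$; the $r\ge 2$ terms have frequencies $\pm m_{j_1}\pm\cdots\pm m_{j_r}$, which are far from lacunary, so that tool does not extend. A crude pointwise triangle bound on the higher-order terms would require $\sum_j|B_j|<\infty$, precisely the $\ell^1$ condition you correctly rejected. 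Thus the ``main obstacle'' remains open in your sketch.

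The paper's route avoids the Dyson expansion altogether. The point is that for $q\ge 2$ the entries $\widetilde a_{K,N}:=a_{K,N}-\prod_{K<j\le N}A_j$ and $b_{K,N}$ are, as $N$ increases with $K$ fixed, partial sums of a single trigonometric series whose $\ell^2$ coefficient norm is controlled by $e^{S_{K,\infty}}-1$ via Lemma~\ref{lema1}. Carleson's weak-$(2,2)$ maximal theorem then bounds $\bigl|\{t:\sup_{N>K}|\widetilde a_{K,N}(t)|>\lambda\}\bigr|$ directly, giving the required maximal control on the full nonlinear object in one stroke rather than term by term. Combined with $\prod_{j>K}A_j-1\to 0$, this yields $\Phi_{K_k}(t)\to 0$ a.e.\ along a suitable subsequence $(K_k)$, after which the triangle inequality for $\rho$ upgrades to the full limit. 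The insight you are missing is that the lacunarity $q\ge 2$ makes the \emph{entries themselves} (not just their first-order pieces) amenable to a linear partial-sum maximal theorem.
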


\begin{theorem}\label{Tmconvae2}
	Let $(m_j)_{j=1}^{\infty}$ be a $q$-lacunary sequence with $q\ge 3$ and suppose that the infinite product \eqref{eq:lacnftdef} converges on a set of positive measure. Then it has $\ell^2$ coefficients.
\end{theorem}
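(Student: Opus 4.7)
My plan is to follow the classical Zygmund argument for lacunary trigonometric series \cite{Z30}, reducing the problem to an $L^2$-statement about the $(1,2)$-entry $b_N(t)$ of the partial product $G_N(t)$ of \eqref{eq:lacnftdef}; writing $G_N(t) = \left[\begin{smallmatrix} a_N(t) & b_N(t) \\ \overline{b_N(t)} & \overline{a_N(t)} \end{smallmatrix}\right]$, a direct computation of the singular values of an arbitrary $\textup{SU}(1,1)$ element yields $\|G\|_{op}=|a|+|b|$, so pointwise bounds on $\|G_N(t)\|_{op}$ become pointwise bounds on $|b_N(t)|$.

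By Egorov's theorem applied to the given convergence on a set of positive measure, there exist $E\subset\mathbb T$ with $|E|>0$ and $M>0$ such that $|b_N(t)|\leq M$ for all $N$ and $t\in E$. Using the recursion $b_N = A_N b_{N-1} + B_N a_{N-1}\,e^{2\pi i m_N t}$ I next establish the Plancherel-type identity
$$\|b_N\|_{\textup{L}^2(\mathbb T)}^2 \,=\, \tfrac12\Bigl(\prod_{j=1}^N (A_j^2+|B_j|^2)-1\Bigr)$$
by induction; $L^2$-orthogonality of the two summands in the recursion follows because $q\geq 2$ forces $m_N > m_1+\cdots+m_{N-1}$, ruling out the coincidence of $m_N$ with any signed sum of the earlier frequencies. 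This identity reformulates the $\ell^2$-condition \eqref{eq:proofcondition} as uniform boundedness of $\|b_N\|_{\textup{L}^2(\mathbb T)}$.

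The crux of the proof is then a localization inequality
$$\int_E |b_N(t)|^2\,dt \,\geq\, C(E,q)\, \|b_N\|_{\textup{L}^2(\mathbb T)}^2,$$
uniform in $N$, with $C(E,q)>0$. Combined with the pointwise bound on $E$ this yields $\|b_N\|_{\textup{L}^2(\mathbb T)}^2 \leq M^2|E|/C(E,q)$ and closes the argument. To prove the localization, I unfold the recursion into the gap-series representation
$$b_N(t) \,=\, \sum_{j=1}^{N} c_j(t)\,e^{2\pi i m_j t}, \qquad c_j(t) \,:=\, B_j\,a_{j-1}(t)\!\prod_{k=j+1}^{N}\!A_k,$$
and observe that $q\geq 3$ gives $m_1+\cdots+m_{j-1} \leq m_j/2$, so each $c_j$ has Fourier support in $[-m_j/2, m_j/2]$ and the summands $c_j(t)\,e^{2\pi i m_j t}$ have pairwise disjoint Fourier supports. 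This exhibits $b_N$ as a Hadamard-type gap series with slowly varying coefficients. The localization should follow by adapting the Riesz-product construction from Zygmund's original proof (cf.\@ \cite[Thm.\,V.6.4]{Z59}): build a nonnegative test function concentrated near $E$ whose Fourier transform is favorably correlated with the frequencies $m_j$, integrate $|b_N|^2$ against it, and use the disjoint Fourier supports to isolate $\sum_j \|c_j\|_{\textup{L}^2}^2 = \|b_N\|_{\textup{L}^2(\mathbb T)}^2$.

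The main obstacle is precisely this localization step. A soft $L^4$-$L^2$ Khintchine-type inequality combined with Paley--Zygmund yields the estimate only for $|E|$ above a quantitative threshold; handling arbitrarily small $|E|>0$ requires the sharper Riesz-product route. In the present nonlinear setting the coefficients $c_j(t)$ are functions of $t$ (depending on the earlier $B_k$'s), though with Fourier support safely inside the appropriate gaps thanks to $q\geq 3$; the technical work consists of tracking how the Riesz product interacts with these slowly varying factors uniformly in $N$, and this is where the stronger lacunarity hypothesis is essential.
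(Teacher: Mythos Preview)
Your proposal has a genuine gap at exactly the point you flag as the ``main obstacle'': the localization inequality $\int_E|b_N|^2\geq C(E,q)\,\|b_N\|_{L^2}^2$ is asserted but not proved, and the Riesz-product route you sketch does not obviously go through in this nonlinear setting. The difficulty is not merely technical bookkeeping. In Zygmund's classical argument the coefficients multiplying $e^{2\pi i m_j t}$ are constants, so the diagonal of $|b_N|^2$ is the constant $\sum_j|d_j|^2$ and the off-diagonal lives at the isolated frequencies $m_j-m_k$; both features are essential to the Riesz-product/Riemann--Lebesgue mechanism. In your decomposition the coefficients $c_j(t)=B_j\,a_{j-1}(t)\prod_{k>j}A_k$ depend on $t$ through $a_{j-1}$, so the diagonal $\sum_j|c_j(t)|^2$ is itself a nonconstant function whose mass could, a priori, avoid $E$, and the off-diagonal terms $c_j\overline{c_k}\,e^{2\pi i(m_j-m_k)t}$ have Fourier support spread over intervals rather than single points. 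Showing that $\int_E|c_j|^2\gtrsim\|c_j\|_{L^2}^2$ would require knowing that $|a_{j-1}|^2$ is well-distributed, which is circular: this is essentially the conclusion you are after.

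The paper proceeds differently and avoids this circularity. Instead of working with the full $b_N$ it passes to the tail blocks $b_{M,N}$, for which Egorov yields $\sup_{t\in E}|b_{M,N}(t)|\to 0$. It then expands $b_{M,N}=\sum_{n\in F}D_n e^{2\pi i n t}$ as a \emph{full} Fourier series (not the gap-series form) and writes
\[
\int_E|b_{M,N}|^2=|E|\sum_n|D_n|^2+\sum_{n\neq 0}\Bigl(\sum_{n_2-n_1=n}D_{n_1}\overline{D_{n_2}}\Bigr)\widehat{1_E}(-n).
\]
The diagonal dominates $|E|\sum_{j=M+1}^N|B_j|^2$. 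The off-diagonal is handled by Cauchy--Schwarz against the tail $\sum_{|n|\geq m_{M+1}}|\widehat{1_E}(n)|^2\to 0$ (Parseval), provided one controls the autocorrelation $\sum_n\bigl|\sum_{n_2-n_1=n}D_{n_1}\overline{D_{n_2}}\bigr|^2$. This is the key combinatorial input: for $q\geq 3$ every integer has at most one \emph{maximally shortened} representation as a difference of alternating signed sums of the $m_j$, and this uniqueness yields the bound $\leq e^{8\sum_{j=M+1}^N|B_j|^2}$ (Lemmas~3.1--3.2 in the paper). One then assumes $\sum|B_j|^2=\infty$, picks $M_k,N_k$ with $\sum_{M_k<j\leq N_k}|B_j|^2\to 1$ so that the exponential stays bounded, and obtains $|E|\leq 0$, a contradiction. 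Thus the paper uses $q\geq 3$ for a \emph{representation-uniqueness} lemma governing the autocorrelation of the Fourier coefficients, not for Fourier-support separation of a gap-series; this is the idea your proposal is missing.
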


Obvious deficiencies of Theorems~\ref{Tmconvmetric}, \ref{Tmconvae1}, and \ref{Tmconvae2} are the restrictions to the cases $q\geq 2$ and $q\geq 3$. It is quite likely that all three theorems remain valid for each $q>1$, but showing this would require different and more involved proofs. Note that this issue does not appear in the linear theory. The lacunary trigonometric series \eqref{eq:lactrigser} can be sufficiently ``sparsified'' by splitting into several subseries, reducing the proofs of Theorems \ref{Tmconvmetric} and \ref{Tmconvae1} to the case, say, $q\geq 10$. Indeed, many proofs of these classical results proceed that way. On the other hand, matrices in the product \eqref{eq:lacnftdef} do not commute, so the same trick does not apply here. Anyway, we have set a modest first goal to formulate and establish \emph{some} convergence results for lacunary $\textup{SU}(1,1)$ products, leaving the most general cases as interesting open problems. We believe that the above theorems can be considered as a good start for a further investigation.

Before the actual proofs we begin by making several initial observations at the beginning of Section~\ref{proofofconvinmetric}. The proof of Theorem \ref{Tmconvmetric} spans over Section~\ref{proofofconvinmetric} and the proofs of Theorems \ref{Tmconvae1} and \ref{Tmconvae2} are in Section~\ref{proofofconvae} of this paper. The proofs are self-contained and have a combinatorial flavor, as we will be counting certain representations of positive integers. We will give complete proofs of all auxiliary results with two exceptions: the nonlinear Parseval identity by Verblunsky \cite{V35} and the famous weak $\textup{L}^2$ Fourier estimate by Carleson \cite{C66}. 
%%%%%%%%%%

\section{Convergence in the metric $d_p$}\label{proofofconvinmetric}
We start by studying finite partial products of the infinite product \eqref{eq:lacnftdef},
\begin{equation}\label{eq:lacnfinite} 
	\begin{bmatrix} a_N(t) & b_N(t) \\ \overline{b_N(t)} & \overline{a_N(t)} \end{bmatrix} :=\prod_{j=1}^{N} \begin{bmatrix} A_j & B_j e^{2\pi i m_j t} \\ \overline{B_j} e^{-2\pi i m_j t} & A_j \end{bmatrix},\end{equation}
for a $q$-lacunary sequence $(m_j)_{j=1}^{\infty}$ with $q\ge 2$. For such $q$ frequencies appearing in the exponentials after performing the multiplication of \eqref{eq:lacnfinite} are mutually different for every $N\in\mathbb{N}$. This is seen inductively; also consult \cite{MTT03}. By an induction we can also easily prove the following, slightly more precise result, which will be needed at the very end of the next section. 
\begin{lemma}\label{lema_diff}
	For a $q$-lacunary sequence $(m_j)_{j=1}^{\infty}$ with $q\ge 2$ the frequencies appearing in the expansion of the $\textup{SU}(1,1)$ trigonometric product
	\begin{equation*}
		\prod_{j=M+1}^{N}\begin{bmatrix}
			A_j & B_je^{2\pi im_jt}  \\
			\overline{B_j}e^{-2\pi im_jt} & A_j
		\end{bmatrix},\enspace M,N\in\mathbb{N}_0, \ M<N,\end{equation*}
	are mutually separated by at least $m_{M+1}$.
\end{lemma}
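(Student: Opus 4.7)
The plan is to prove the lemma by induction on $N-M$, after reducing it to a purely arithmetic separation statement about alternating signed sums. Decomposing each factor as
$$\begin{bmatrix} A_j & B_je^{2\pi im_jt}\\ \overline{B_j}e^{-2\pi im_jt} & A_j\end{bmatrix}=A_j\,I_2+B_je^{2\pi im_jt}\,E_{12}+\overline{B_j}e^{-2\pi im_jt}\,E_{21},$$
where $E_{12},E_{21}$ are the off-diagonal elementary matrices, one sees that expanding $\prod_{j=M+1}^{N}$ indexes the summands by sequences $(\epsilon_j)\in\{-1,0,+1\}^{N-M}$ with frequency $\sum_j\epsilon_jm_j$ and matrix coefficient a product of $I_2$'s, $E_{12}$'s and $E_{21}$'s. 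Since $E_{12}^2=E_{21}^2=0$, the only non-vanishing contributions come from sequences whose non-zero entries strictly alternate in sign; within a fixed entry of the product matrix, the starting and ending row further fix the initial sign and the parity of the number of non-zero entries. Thus the frequencies appearing in a single entry are, up to an overall sign, exactly the values of the alternating sum
$$T(j_1,\ldots,j_k)=\sum_{\ell=1}^{k}(-1)^{\ell+1}m_{j_\ell},\qquad M+1\le j_1<\cdots<j_k\le N,$$
restricted to one parity of $k$, and the same argument will apply to each of the four entries.

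The key quantitative input is a magnitude estimate, proved by a short induction on $k$ from the telescoping identity $|T(j_1,\ldots,j_k)|=m_{j_k}-|T(j_1,\ldots,j_{k-1})|$ together with the lacunarity $m_{j_{k-1}}\le m_{j_k}/2$:
$$\tfrac{1}{2}m_{j_k}\ \le\ |T(j_1,\ldots,j_k)|\ \le\ m_{j_k},$$
with the upper bound attained only at $k=1$ and with the strict form $|T|\le m_{j_k}-m_{M+1}$ for every $k\ge 2$; the latter follows from the parallel inductive bound $|T(j_1,\ldots,j_{k-1})|\ge m_{M+1}$. Consequently, the allowed intervals for $|T|$ corresponding to different maximal indices $j_k$ are pairwise disjoint, and $|T|$ uniquely determines $j_k$.

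With these ingredients, the separation is a short case analysis on two distinct frequencies $T=T(j_1,\ldots,j_k)$ and $T'=T(j_1',\ldots,j_{k'}')$ coming from a common entry. If the maximal indices coincide, $j_k=j_{k'}'=:r$, then subtracting the shared leading term $(-1)^{k+1}m_r$ turns $T-T'$ into the difference of two alternating sums with indices in $[M+1,r-1]$, and the induction hypothesis applied to the shorter product $\prod_{j=M+1}^{r-1}$ yields $|T-T'|\ge m_{M+1}$. Otherwise the maximal indices differ, say $j_k>j_{k'}'$, so $m_{j_k}\ge 2m_{j_{k'}'}$; the magnitude bounds then place $|T|$ and $|T'|$ in disjoint intervals, and the sharpened upper bound on $|T'|$ combined with the lower bound on $|T|$ delivers $|T|-|T'|\ge m_{M+1}$.

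The most delicate step is the borderline $q=2$ sub-case with $j_k=j_{k'}'+1$ and $m_{j_k}=2m_{j_{k'}'}$, where several lacunarity inequalities become equalities at once: here the sharpened upper bound $|T'|\le m_{j_{k'}'}-m_{M+1}$ (valid only for $k'\ge 2$) is essential, and the sub-sub-case $k=k'=1$ must be treated directly using $m_{j_k}-m_{j_{k'}'}\ge(q-1)m_{j_{k'}'}\ge m_{M+1}$.
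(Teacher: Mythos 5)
Your overall framework is sound, and it usefully fills in what the paper leaves unproved (the paper only asserts the lemma ``by an induction,'' pointing to the Muscalu--Tao--Thiele reference): the path expansion with $E_{12}^2=E_{21}^2=0$, the reduction to an arithmetic statement about alternating sums $T(j_1,\dots,j_k)$ of a fixed parity of $k$, the telescoping identity $|T(j_1,\dots,j_k)|=m_{j_k}-|T(j_1,\dots,j_{k-1})|$, the bounds $\tfrac12 m_{j_k}\le |T|\le m_{j_k}$, the sharpened bound $|T|\le m_{j_k}-m_{M+1}$ for $k\ge 2$ via $|T(j_1,\dots,j_{k-1})|\ge m_{M+1}$, and the Case-1 induction on the shorter product are all correct. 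Note also that your entrywise reading (with the parity restriction) is not a convenience but a necessity: across different entries the separation genuinely fails for $q=2$, e.g.\ the frequency $m_{M+2}-m_{M+1}$ of the diagonal entry and the frequency $m_{M+1}$ of the off-diagonal entry may coincide when $m_{M+2}=2m_{M+1}$; so the step where parity rules out pairs such as $k=2$, $k'=1$ must be invoked explicitly.

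There is, however, a hole in your final case analysis. When the maximal indices differ, your stated tools cover only $k'\ge 2$ (sharpened bound on $|T'|$) and the sub-sub-case $k=k'=1$; but equal parity with $k'=1$ forces only that $k$ is odd, so the sub-case $k'=1$, $k\ge 3$ is left unaddressed, and there your bounds as written do not suffice: in the borderline situation $m_{j_k}=2m_{j'_1}$ the crude estimate gives only $|T|-|T'|\ge \tfrac12 m_{j_k}-m_{j'_1}=0$ (e.g.\ $m_j=2^{j-1}$, $T=m_1-m_2+m_4$, $T'=m_3$). The repair uses exactly your own sharpened bound, applied to the truncation of $T$ rather than to $T'$: for $k\ge 3$ one has $|T|=m_{j_k}-|T(j_1,\dots,j_{k-1})|\ge m_{j_k}-m_{j_{k-1}}+m_{M+1}$, hence
\begin{equation*}
|T|-|T'|\ \ge\ m_{j_k}-m_{j_{k-1}}-m_{j'_1}+m_{M+1}\ \ge\ m_{M+1},
\end{equation*}
because $m_{j_{k-1}}+m_{j'_1}\le 2m_{j_k-1}\le m_{j_k}$ by $q\ge 2$ (and since this shows $|T|-|T'|\ge m_{M+1}>0$, the desired $|T-T'|\ge m_{M+1}$ follows regardless of the relative signs of $T$ and $T'$). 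With this sub-case added, the proof is complete and in the same combinatorial spirit as the paper's later lemma on maximally shortened representations.
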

The following recursive formulas apply: \begin{align*}
	a_N(t)&=a_{N-1}(t) A_N+b_{N-1}(t)\overline{B_N}e^{-2\pi im_Nt},\\
	b_N(t)&=a_{N-1}(t) B_Ne^{2\pi im_Nt}+b_{N-1}(t)A_N.
\end{align*}
\begin{lemma}\label{lema1}
	For a $q$-lacunary sequence $(m_j)_{j=1}^{\infty}$ with $q\ge 2$, the following two relations hold:
	\begin{enumerate}
		\item[(a)] $\displaystyle\int_{\mathbb T}\left(\left| a_{N}(t)\right|^2+\left|b_{N}(t) \right|^2\right)\textnormal dt=\prod\limits_{j=1}^{N}\left(A_j^2+|B_j|^2 \right)$
		\item[(b)] $\displaystyle\int_{\mathbb T}\left(\left| a_{N}(t)-A_{1}\cdots A_N\right|^2+\left|b_{N}(t) \right|^2\right)\textnormal dt =\prod\limits_{j=1}^{N}\left(A_j^2+|B_j|^2 \right)-\prod\limits_{j=1}^{N}A_j^2$.
	\end{enumerate}
\end{lemma}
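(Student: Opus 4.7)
Both identities are proved by induction on $N$, using the recursive formulas just stated. Squaring the recursion gives
\begin{align*}
|a_N|^2+|b_N|^2 &= (A_N^2+|B_N|^2)(|a_{N-1}|^2+|b_{N-1}|^2)\\
&\quad + 2A_N B_N\,a_{N-1}\overline{b_{N-1}}\,e^{2\pi i m_N t} + 2A_N \overline{B_N}\,\overline{a_{N-1}}b_{N-1}\,e^{-2\pi i m_N t},
\end{align*}
so the inductive step for (a) reduces to verifying that $\widehat{a_{N-1}\overline{b_{N-1}}}(-m_N)=0$. The key technical input is a sharp description of the Fourier supports of $a_{N-1}$ and $b_{N-1}$.

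Expanding the matrix product, each monomial contributing to $a_{N-1}$ corresponds to a ``path'' $1\to\cdots\to 1$ in $\{1,2\}^{N}$ whose non-zero signs $\epsilon_k:=i_k-i_{k-1}\in\{\pm1\}$ are forced to alternate $+1,-1,+1,-1,\ldots,+1,-1$ at positions $j_1<j_2<\cdots<j_{2r}\le N-1$. The associated frequency may be rewritten as
\[ m_{j_1}-m_{j_2}+\cdots-m_{j_{2r}}=(m_{j_1}-m_{j_{2r}})+\sum_{k=1}^{r-1}(m_{j_{2k+1}}-m_{j_{2k}}),\]
in which every bracketed term on the right is strictly positive. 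Combining this lower bound with the upper bound $0$ obtained from the pairing $\sum_{k=1}^{r}(m_{j_{2k-1}}-m_{j_{2k}})$ yields
\[ \operatorname{supp}\widehat{a_{N-1}}\subseteq[m_1-m_{N-1},\,0],\]
with $0$ attained only by the trivial path ($r=0$), whose coefficient is $A_1\cdots A_{N-1}$. The analogous analysis for $b_{N-1}$, whose paths carry an additional terminal $+1$, produces $\operatorname{supp}\widehat{b_{N-1}}\subseteq[m_1,\,m_{N-1}]$.

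Consequently $\operatorname{supp}\widehat{a_{N-1}\overline{b_{N-1}}}\subseteq[m_1-2m_{N-1},\,-m_1]$, and the hypothesis $q\ge 2$ gives $-m_N\le -2m_{N-1}<m_1-2m_{N-1}$; hence $-m_N$ lies strictly below the support, $\widehat{a_{N-1}\overline{b_{N-1}}}(-m_N)=0$, the two cross terms vanish upon integration, and the induction for (a) closes.

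For (b), the same support argument applied directly to $a_N$ identifies $\int_{\mathbb T}a_N(t)\,dt=\widehat{a_N}(0)=A_1\cdots A_N$. Expanding $|a_N-A_1\cdots A_N|^2=|a_N|^2-2(A_1\cdots A_N)\operatorname{Re}(a_N)+(A_1\cdots A_N)^2$, integrating, adding $\int|b_N|^2\,dt$, and invoking (a) delivers (b). The main obstacle is producing the sharp support bounds; the crude triangle-inequality estimate $|\mathrm{freq}|\le\sum_{k<N}m_k$ only gives $\operatorname{supp}\widehat{a_{N-1}\overline{b_{N-1}}}\subseteq(-2m_N,2m_N)$, which is not strong enough to exclude $-m_N$.
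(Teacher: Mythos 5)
Your proof is correct, and its skeleton matches the paper's: induction on $N$ through the stated recursion, vanishing of the two cross terms at frequency $-m_N$, and, for (b), the identity $\int_{\mathbb T}a_N(t)\,dt=A_1\cdots A_N$ followed by expanding the square and invoking (a). The difference lies in how the orthogonality is justified. The paper appeals to the fact recorded just before the lemma (and sharpened in Lemma~\ref{lema_diff}) that for $q\ge 2$ all frequencies in the expanded product are mutually distinct, and then uses Parseval; that distinctness claim is only sketched there (``seen inductively'') and is reused later in the paper. You instead prove a one-sided frequency localization directly from the alternating-sign path expansion: $\operatorname{supp}\widehat{a_{N-1}}\subseteq[m_1-m_{N-1},0]$ with the zero frequency carried only by the diagonal path (coefficient $A_1\cdots A_{N-1}$), and $\operatorname{supp}\widehat{b_{N-1}}\subseteq[m_1,m_{N-1}]$, so that $\widehat{a_{N-1}\overline{b_{N-1}}}$ is supported in $[m_1-2m_{N-1},-m_1]$ and $q\ge2$ pushes $-m_N$ strictly below this band. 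This input is weaker than full distinctness of frequencies but exactly sufficient; it is self-contained, it isolates where lacunarity enters (only through $m_N\ge 2m_{N-1}$, since the sign bounds ${\rm freq}(a)\le 0$ and ${\rm freq}(b)\ge m_1$ hold for any increasing sequence), and it delivers $\widehat{a_N}(0)=A_1\cdots A_N$ for part (b) at no extra cost; the paper's stronger distinctness statement, by contrast, is what gets recycled in Section~3. One small wording slip: in your displayed regrouping of the frequency, the bracket $(m_{j_1}-m_{j_{2r}})$ is negative for $r\ge1$, not strictly positive; only the brackets inside the sum are positive, and the first bracket is precisely the term you bound below by $m_1-m_{N-1}$, so the conclusion is unaffected.
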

\begin{proof}
	(a) The first statement is proven by induction. It is easy to show that the induction basis holds. In the inductive step we first apply recursive formulas to get \begin{align*}
		\left\| a_{N+1}(t)\right\|_{\textup L_t^2(\mathbb T)}^2&=\left\| a_{N}(t) A_{N+1}+b_{N}(t)\overline{B_{N+1}}e^{-2\pi im_{N+1}t}\right\|_{\textup L_t^2(\mathbb T)}^2,\\[0.15cm] 
		\left\|b_{N+1}(t)\right\|_{\textup L_t^2(\mathbb T)}^2&=\left\|a_{N}(t) B_{N+1}e^{2\pi im_{N+1}t}+b_{N}(t)A_{N+1}\right\|_{\textup L_t^2(\mathbb T)}^2.
	\end{align*}
	Since $q\ge 2$ we know that all frequencies appearing in above trigonometric polynomials are mutually different. Using orthogonality and adding  these two equalities we obtain \[
	\left\| a_{N+1}(t)\right\|_{\textup L_t^2(\mathbb T)}^2+\left\|b_{N+1}(t)\right\|_{\textup L_t^2(\mathbb T)}^2=\left( \left\| a_{N}(t)\right\|_{\textup L_t^2(\mathbb T)}^2
	+\left\|b_{N}(t)\right\|_{\textup L_t^2(\mathbb T)}^2\right) \left(A_{N+1}^2+|B_{N+1}|^2\right). \]
	The first factor is equal to $\prod_{j=1}^{N}(A_j^2+ |B_j|^2 )$ by the inductive hypothesis and the induction step is complete.
	
	\noindent (b) First, notice that $\int_{\mathbb T}\left|a_{N}(t)-A_{1}\cdots A_N\right|^2\textnormal dt= \int_{\mathbb T}\left|a_{N}(t)\right|^2\textnormal dt-\left(A_{1}\cdots A_N\right)^2$, since $\int_{\mathbb T} a_{N}(t)\textnormal dt=A_{1}\cdots A_N.$ This implies \begin{align*}\int_{\mathbb T}\left(\left| a_{N}(t)-A_{1}\cdots A_N\right|^2+\left|b_{N}(t) \right|^2\right)\textnormal dt &= \int_{\mathbb T}\left( \left| a_{N}(t)\right|^2+\left| b_N(t)\right|^2\right) \textnormal dt-\left(A_{1}\cdots A_N\right)^2\\
		&= \prod_{j=1}^{N}\left(A_j^2+|B_j|^2 \right)-\prod_{j=1}^{N}A_j^2 
	\end{align*}
	where the second equality follows from part (a) of the lemma.
\end{proof}
We can write \begin{equation*}
	a_N(t)=\sum_{n\in E_N}C_ne^{2\pi int},\quad b_N(t)=\sum_{n\in F_N}D_ne^{2\pi int}
\end{equation*} where $E_N, F_N\subseteq\mathbb Z$ are the sets of all frequencies for which the Fourier coefficients of trigonometric polynomials $a_N$ and $b_N$, respectively, are nonzero. Notice that Lemma~\ref{lema1} then actually says $$\sum_{n\in E_N\cup F_N}\left(|C_n|^2+|D_n|^2\right)= \prod_{j=1}^{N}\left(A_j^2+|B_j|^2 \right),$$ assuming that for an initial $q$-lacunary sequence $(m_j)_{j=1}^{\infty}$ we have $q\ge 2$.
\begin{proof}[Proof of Theorem~\ref{Tmconvmetric}]
Recall that the convergence of \eqref{eq:lacnftdef} in the metric $d_p$ means that \begin{equation}\label{eq:lacncauchy}
\lim\limits_{M,N\rightarrow\infty}d_p\left( \begin{bmatrix}
a_M & b_M \\
\overline{b_M} & \overline{a_M} 
\end{bmatrix},\begin{bmatrix}
a_N & b_N \\
\overline{b_N} & \overline{a_N} 
\end{bmatrix}\right) = 0,
\end{equation} since the space $\textup L^p\left( \mathbb T,\textup{SU}(1,1)\right)$ is complete. First assume that condition \eqref{eq:proofcondition} is satisfied and that $p\geq 1$, and take $M,N\in\mathbb N$, $M<N$. Using formula \eqref{eq:formuladp} and the fact that metric $\rho$ is invariant with respect to the left multiplication we get\begin{align*}
d_p\left( \begin{bmatrix}
a_M & b_M \\
\overline{b_M} & \overline{a_M} 
\end{bmatrix},\begin{bmatrix}
a_N & b_N \\
\overline{b_N} & \overline{a_N} 
\end{bmatrix}\right) &= d_p\left(I,\begin{bmatrix}
a_{M,N} & b_{M,N} \\
\overline{b_{M,N}} & \overline{a_{M,N}} 
\end{bmatrix} \right)\\
&=\left\|\log\left(1+\left|a_{M,N}(t)-1\right|+\left|b_{M,N}(t)\right|\right)\right\| _{\textup L_t^p(\mathbb T)},\end{align*}
where we denoted \begin{equation*}
\begin{bmatrix}
a_{M,N}(t) & b_{M,N}(t) \\ 
\overline{b_{M,N}(t)} & \overline{a_{M,N}(t)}
\end{bmatrix} := \prod_{j=M+1}^{N} \begin{bmatrix}
A_j & B_je^{2\pi im_jt} \\[0.15cm] 
\overline{B_j}e^{-2\pi im_jt} & A_j
\end{bmatrix}.\end{equation*} 
Put \begin{equation} \label{eq:formulaSmn}
S_{M,N}:= \sum\limits_{j=M+1}^{N}\log\left(A_j^2+|B_j|^2 \right).
\end{equation} 
Since $q\ge 2$, we can apply Lemma \ref{lema1} (b) to the above finite matrix product starting with the $(M+1)$-st term of \eqref{eq:lacnfinite} and get \begin{equation}\label{eq:sumsquares}
\int_{\mathbb T}\left(\left| a_{M,N}(t)-A_{M+1}\cdots A_N\right|^2+\left|b_{M,N}(t) \right|^2\right)\textnormal dt=\prod_{j=M+1}^{N}\left(A_j^2+|B_j|^2 \right)-\prod_{j=M+1}^{N} A_j^2 .
\end{equation}
For $\alpha>0$, we have \begin{align*}E_\alpha& := \left\lbrace t\in\mathbb T:\left(  \log \left(1+\left| a_{M,N}(t)-1\right|+\left|b_{M,N}(t) \right| \right)\right)^p>\alpha\right\rbrace\\
%&\;=\left\lbrace t\in\mathbb T:\left| a_{M,N}(t)-1\right|+\left|b_{M,N}(t) \right|>e^{\alpha^{1/p}}-1\right\rbrace\\
&\;=\left\lbrace  t\in\mathbb T:\left( \left| a_{M,N}(t)-1\right|+\left|b_{M,N}(t) \right|\right)^2>\big( e^{\alpha^{1/p}}-1\big)^2\right\rbrace \\
&\;\subseteq \left\lbrace  t\in\mathbb T:\left| a_{M,N}(t)-1\right|^2+\left|b_{M,N}(t) \right|^2>\frac{1}{2}\big( e^{\alpha^{1/p}}-1\big)^2\right\rbrace .
\end{align*}
where the last inclusion is obtained by applying an elementary inequality $(x+y)^2\le2x^2+2y^2$, for $x,y\in\mathbb R$. Furthermore, notice that \begin{align*} &\int_{\mathbb T}\left( \left| a_{M,N}(t)-1\right|^2+\left|b_{M,N}(t) \right|^2\right)\textnormal dt\\
&\le\int_{\mathbb T}\left(\left(  \left| a_{M,N}(t)-A_{M+1}\cdots A_N\right| +\left|A_{M+1}\cdots A_N-1\right|\right) ^2+\left|b_{M,N}(t) \right|^2\right)\textnormal dt\\
%&\le\int_{\mathbb T}\left( 2\left| a_{M,N}(t)-A_{M+1}\cdots A_N\right|^2+\left|b_{M,N}(t) \right|^2\right)\textnormal dt+2\left| A_{M+1}\cdots A_N-1\right|^2\\
&\le 2\int_{\mathbb T}\left(\left| a_{M,N}(t)-A_{M+1}\cdots A_N\right|^2+\left|b_{M,N}(t) \right|^2\right)\textnormal dt+2\left(A_{M+1}\cdots A_N-1\right)^2,
\intertext{and that is, using \eqref{eq:sumsquares}, equal to }
&=2\Big(\prod\limits_{j=M+1}^{N}\left(A_j^2+|B_j|^2\right) -\prod\limits_{j=M+1}^{N}A_j^2\Big)+2\Big( \prod\limits_{j=M+1}^{N}A_j-1\Big)^2\\
&=2\,\Big( e^{S_{M,N}}-2\prod\limits_{j=M+1}^{N} A_j %\underbrace{A_j}_{\ge 1}
 +1 \Big)\le 2\left(e^{S_{M,N}}-1 \right).
\end{align*}
Chebyshev’s inequality can now be used to see that 
\begin{equation*}
\left|E_\alpha\right|\le \dfrac{2}{\big(e^{\alpha^{1/p}}-1\big)^2 }\int_{\mathbb T}\left( \left| a_{M,N}(t)-1\right|^2+\left|b_{M,N}(t) \right|^2\right)\textnormal dt \le \dfrac{4\left( e^{S_{M,N}}-1\right) }{\big(e^{\alpha^{1/p}}-1\big)^2 }.
\end{equation*}
Also, we know that for $0<p<\infty$ and a complex measurable function $f$ the equality $$\int|f|\textnormal d\mu=\int_{0}^{\infty}\mu\left(\left\lbrace x\in X:|f(x)|>\alpha\right\rbrace\right)\textnormal d\alpha$$ holds. From these two relations we obtain, for $p\geq 1$, \begin{equation}\label{eq:measureofE}
d_p\left( \begin{bmatrix}
	a_M & b_M \\
	\overline{b_M} & \overline{a_M} 
\end{bmatrix},\begin{bmatrix}
	a_N & b_N \\
	\overline{b_N} & \overline{a_N} 
\end{bmatrix}\right)^p =\int\limits_0^{\infty}\left|E_\alpha\right|\,\textnormal d\alpha  
\le \left( e^{S_{M,N}}-1\right)\int\limits_0^{\infty}\dfrac{4\,\textnormal d\alpha}{\big(e^{\alpha^{1/p}}-1\big)^2 }.
\end{equation}
Now for $2<p<\infty$ we conclude that \begin{equation}\label{eq:ocjenazaCp}
C_p := \int\limits_0^{\infty}\dfrac{4\,\textnormal d\alpha}{\big(e^{\alpha^{1/p}}-1\big)^2 } <\infty,
\end{equation}
since the function under the integral is asymptotically equal to $\frac{4}{\alpha^{2/p}}$ when $\alpha\rightarrow 0^+$, and decreases faster then $\frac{1}{\alpha^2}$ when $\alpha\rightarrow\infty$. Letting $M,N\rightarrow\infty$, from assumption \eqref{eq:proofcondition} we get that $$\lim_{M,N\rightarrow\infty}\sum_{j=M+1}^{N}\log(A_j^2+|B_j|^2)=0.$$ That implies $\lim_{M,N\rightarrow\infty}( e^{S_{M,N}}-1) =0$, which, together with  %means that from letting $M,N\rightarrow\infty$ in 
\eqref{eq:measureofE}, implies \eqref{eq:lacncauchy}. Therefore, the statement is proven for $p>2$. If $1\le p\le 2$, then \eqref{eq:lacncauchy} follows from the previously established case $p>2$ and the monotonicity of the $\textup L^p$ norms on $\mathbb T$. For $0<p<1$ it is known that $d_p\le d_1^p$, again by the monotonicity of the $\textup{L}^p$ quasinorms, so in this case \eqref{eq:lacncauchy} follows from the established convergence for $d_1$.

Conversely, we now assume that the infinite product \eqref{eq:lacnftdef} converges with the respect to the metric $d_p$. Let $M,N\in\mathbb N$, $M<N$ and let us keep the notation from \eqref{eq:formulaSmn}. It is sufficient to consider the case when $0<p<1$, due to the monotonicity of the $\textup L^p$ quasinorms on $\mathbb T$. Let $0<\theta <1$ be chosen such that the equality $\frac{1}{2}=\frac{1-\theta}{p}+\frac{\theta}{4}$ holds. Nonlinear version of Parseval's identity that we will need, says $$ \int_{\mathbb{T}} \log|a_{M,N}(t)| \,\textnormal dt = \sum_{j=M+1}^N \log A_j$$
and the proof of this formula  \cite[\S{}2.1]{TT03} traces back to Verblunsky \cite[pp.~291]{V35}. Furthermore, for $x\ge1$ the next two inequalities hold:  \begin{equation*}
	\log\left(2x^2-1\right)\le4\log x,\quad \sqrt{\log x}\le\log(x+\sqrt{x^2-1}). 
\end{equation*} Using these two inequalities, log-convexity of the $\textup L^p$ quasinorms and the nonlinear version of Parseval's identity we get \begin{align}\label{eq:ocjena sa S_M,N}
S_{M,N}&\le 4\sum\limits_{j=M+1}^{N}\log A_j=4\Big\| \sqrt{\log\left|a_{M,N}(t) \right| }\Big\|_{\textup L^2_t(\mathbb T)}^2\le 4\left\| \log\left( 1+\left| a_{M,N}(t)-1\right|+\left| b_{M,N}(t)\right|\right) \right\|_{\textup L^2_t(\mathbb T)}^2\nonumber\\
%&=4\int_{\T}\log\left|a_{M,N}(t) \right|\textnormal dt\nonumber\\ 
%&=4\left\| \sqrt{\log\left|a_{M,N}(t) \right| }\right\|_{\Lp^2_t(\T)}^2\nonumber\\
%&\le 4\left\| \log\left( 1+\left| a_{M,N}(t)-1\right|+\left| b_{M,N}(t)\right|\right) \right\|_{\textup L^2_t(\mathbb T)}^2\nonumber\\
&\le 4\left\| \log\left( 1+\left| a_{M,N}(t)-1\right|+\left| b_{M,N}(t)\right|  \right) \right\|_{\textup L^p_t(\mathbb T)}^{2(1-\theta)} \cdot\left\| \log\left( 1+\left| a_{M,N}(t)-1\right|+\left| b_{M,N}(t)\right|  \right) \right\|_{\textup L^4_t(\mathbb T)}^{2\theta}\nonumber \\
&\le 4\left\| \log\left( 1+\left| a_{M,N}(t)-1\right|+\left| b_{M,N}(t)\right|  \right) \right\|_{\textup L^p_t(\mathbb T)}^{2(1-\theta)} \cdot\left( C_4 ( e^{S_{M,N}}-1) \right)^{\theta/2}. 
\end{align}
For the second factor we use \eqref{eq:measureofE} with the same notation as in \eqref{eq:ocjenazaCp} for $p=4$. Now assume the opposite, that $\sum_{j=1}^{\infty}\log(A_j^2+|B_j|^2)=\infty$. Notice that our assumption \eqref{eq:lacncauchy} for $M=j-1$ and $N=j$ implies \begin{equation*}\lim\limits_{j\rightarrow\infty}\left\|\log\left( 1+|A_j -1|+|B_j|\right) \right\|_{\textup L_t^p(\mathbb T)}=0,\end{equation*}
which means that \begin{equation}\label{limopci}
\lim\limits_{j\rightarrow\infty}\log\left(A_j+|B_j|\right) =0.
\end{equation}
Since $\log(A_j^2+|B_j|^2)\le 2\log\left(A_j+|B_j|\right)$, from \eqref{limopci} we have that individual terms of the divergent series $\sum_{j=1}^{\infty}\log(A_j^2+|B_j|^2 )$ tend to $0$. %For the sequence $\left( \log\left(A_j+|B_j|\right) \right)_{j=1}^{\infty}$ we now use a known result about a sequence of nonnegative real numbers that states. Let $(x_j)_{j=1}^{\infty}$ be a sequence of nonnegative real numbers such that $\lim_{j\rightarrow\infty}x_j=0$ and $\sum_{j=1}^{\infty}x_j=\infty$. Then there exist sequences of indices $(M_k)_{k\in\mathbb N}$ and $(N_k)_{k\in\mathbb N}$ such that $M_k<N_k$, $\lim_{k\rightarrow\infty}M_k=\infty$, $\lim_{k\rightarrow\infty}N_k=\infty$, $\lim_{k\rightarrow\infty}\sum_{j=M_k+1}^{N_k}x_j=1.$ 
Using this we can easily find strictly increasing sequences of indices $(M_k)_{k\in\mathbb N}$ and $(N_k)_{k\in\mathbb N}$ such that $\lim_{k\rightarrow\infty}S_{M_k,N_k}=1$. Once again, from our assumption \eqref{eq:lacncauchy} we get \begin{equation}\label{eq:eq325}
\lim\limits_{k\rightarrow\infty }\left\|\log\left( 1+\left|a_{M_k,N_k}(t) -1\right|+\left|b_{M_k,N_k}(t) \right|\right) \right\|_{\textup L^p_t(\mathbb T)}=0.
\end{equation}
Finally, \eqref{eq:ocjena sa S_M,N} now implies $$S_{M_k,N_k}\le4C_4^{\theta/2}\big( e^{S_{M_k,N_k}}-1\big)^{\theta/2}\left\|\log\left( 1+\left|a_{M_k,N_k}(t) -1\right|+\left|b_{M_k,N_k}(t) \right|\right) \right\|_{\textup L^p_t(\mathbb T)}^{2(1-\theta)}.$$
Letting $k\rightarrow\infty$, from $\lim_{k\rightarrow\infty}S_{M_k,N_k}=1$ and \eqref{eq:eq325} we get a contradiction. Consequently, \eqref{eq:proofcondition} holds. The statement for $p\ge 1$ then follows from the previous case $0<p<1$ and the fact that for $q<1\le p$ we have $d_q^{1/q}\le d_p$. This completes the proof of Theorem \ref{Tmconvmetric}.
\end{proof}
%%%%%%%%%%

\section{Convergence almost everywhere}\label{proofofconvae}

We start with the proof of Theorem \ref{Tmconvae1} that relies on Carleson's result \cite{C66}.
\begin{proof}[Proof of Theorem~\ref{Tmconvae1}]
Since the metric space $\left(\textup{SU}(1,1),\rho \right) $ is complete, in order to prove convergence of the infinite product \eqref{eq:lacnftdef} at the point $t\in\mathbb T$  it is sufficient to prove
\[\lim\limits_{M,N\rightarrow\infty}\rho\left(\begin{bmatrix}
	a_M(t) & b_M(t) \\
	\overline{b_M(t)} & \overline{a_M(t)} 
	\end{bmatrix},\begin{bmatrix}
		a_N(t) & b_N(t) \\
		\overline{b_N(t)} & \overline{a_N(t)} 
		\end{bmatrix} \right)=\lim\limits_{M,N\rightarrow\infty}\log\left( 1+\left|a_{M,N}(t) -1\right|+\left|b_{M,N}(t) \right|\right)=0,\]
or equivalently \begin{equation}\label{eq:limtoprove}
\lim\limits_{M,N\rightarrow\infty} \left|a_{M,N}(t)-1 \right|=0\enspace\textnormal{ and}\enspace\lim\limits_{M,N\rightarrow\infty} \left|b_{M,N}(t) \right|=0.
\end{equation}
For $M,N\in\mathbb N$, $M<N$ denote $$\widetilde{a}_{M,N}(t) := a_{M,N}(t)-\prod_{j=M+1}^{N}A_j.$$ 
Also, we put \begin{equation*}
\widetilde{a}_{M,N}(t)=\sum_{n\in E_{M,N}}C_ne^{2\pi int},\quad b_{M,N}(t)=\sum_{n\in F_{M,N}}D_ne^{2\pi int}
\end{equation*} where $E_{M,N}, F_{M,N}\subseteq\mathbb Z$ are the sets of all frequencies for which the Fourier coefficients of $\widetilde{a}_{M,N}$ and $b_{M,N}$, respectively, are nonzero. Due to assumption \eqref{eq:proofcondition} there exists $K_k\in\mathbb N$ such that, for all $M,N\ge K_k$, inequality $e^{S_{M,N}}-1\le\frac{1}{k^22^k}$ holds. We can also assume $K_1<K_2<\ldots$. In addition, denote \begin{equation*}
\widetilde{G}_k :=\Bigg\lbrace t\in\mathbb T:\sup_{\substack{N\in\mathbb N \\ N>K_k}}\left|\widetilde{a}_{K_k,N}(t)\right|>\frac{1}{k} \Bigg\rbrace, \quad
G_k:=\Bigg\lbrace t\in\mathbb T:\sup_{\substack{N\in\mathbb N \\ N>K_k}}\left| b_{K_k,N}(t)\right|>\frac{1}{k} \Bigg\rbrace.\end{equation*}
Notice that, since $m_{j+1}\ge 2m_j$, trigonometric polynomials $\widetilde{a}_{K_k,N}$ are all partial sums of the same trigonometric series. By applying linear Carleson's theorem \cite{C66} to that trigonometric series and using the remark after Lemma \ref{lema1} and the fact that $A_j\ge 1$ the next inequality follows with a finite absolute constant $C$:
\begin{align*}\left|\widetilde{G}_k\right| &\le C\left(\frac{1}{k}\right)^{-2}\sum_{n\in E_{K_k,\infty}}|C_n|^2\le C\left(\frac{1}{k}\right)^{-2}\left( \prod_{j=K_k+1}^{\infty}\left(A_j^2+|B_j|^2 \right)-1\right)\\ 
&= C\left(\frac{1}{k}\right)^{-2}\left(e^{S_{K_k,\infty}}-1\right)\le C\left(\frac{1}{k}\right)^{-2}\frac{1}{k^22^k} = \frac{C}{2^k},\end{align*}
where $E_{K_k,\infty}:=\bigcup_{\substack{N\in\mathbb N \\ N>K_k}} E_{K_k,N}$ and  $S_{M,\infty}:= \sum_{j=M+1}^{\infty}\log(A_j^2+|B_j|^2 )$, for $M\in\mathbb N$. Analogously, we get $|G_k|\le \frac{C}{2^k}.$ 

Since $\sum_{k=1}^{\infty}|\widetilde{G}_k|<\infty$, Borel-Cantelli's lemma implies $| \cap_{l=1}^{\infty}\cup_{k=l}^{\infty}\widetilde{G}_k| =0$. Hence, a.e.\ $t\in\mathbb T$ is in only finitely many sets $\widetilde{G}_k$. Therefore, for a.e.\ $t\in\mathbb T$ for all but finitely many $k\in\mathbb N$ for every $N>K_k$ we have \begin{align*}
\left| a_{K_k,N}(t)-1 \right|& \le \left| \widetilde{a}_{K_k,N}(t)\right|+\prod_{j=K_k+1}^{N}A_j-1 \le \frac{1}{k}+\prod_{j=K_k+1}^{N}\left(A_j^2+|B_j|^2\right)-1\\
&\le \frac{1}{k}+e^{S_{K_k,\infty}}-1 \le\frac{2}{k},
\end{align*} 
which implies
$$\sup_{\substack{N\in\mathbb N \\ N>K_k}}\left| a_{K_k,N}(t)-1 \right|\le\frac{2}{k}.$$
Analogously, from $\sum_{k=1}^{\infty}\left|G_k\right|<\infty$ we get that for a.e.\ $t\in\mathbb T$ for all but finitely many $k\in\mathbb N$ for every $N>K_k$ the following inequality holds $$\sup_{\substack{N\in\mathbb N \\ N>K_k}}\left| b_{K_k,N}(t)\right|\le\frac{1}{k}.$$
Let $M,N\in\mathbb N$ and $t\in\mathbb{T}$ be such that $K_k\le M<N$, $t\notin \cap_{l=1}^{\infty}\cup_{k=l}^{\infty}\widetilde{G}_k$ and $t\notin \cap_{l=1}^{\infty}\cup_{k=l}^{\infty}G_k$. Put $g_N(t) := \begin{bmatrix}
a_N(t) & b_N(t) \\
\overline{b_N(t)} & \overline{a_N(t)} 
\end{bmatrix}$. The triangle inequality for $\rho$ implies $$\rho\left( g_M(t),g_N(t)\right) \le\rho\left(g_{K_k}(t),g_M(t)\right)+\rho\left( g_{K_k}(t),g_N(t)\right),$$
and we get \begin{equation*}\;1+\left|a_{M,N}(t)-1\right|+\left|b_{M,N}(t) \right|
\le \left( 1+\left|a_{K_k,M}(t)-1\right|+\left|b_{K_k,M}(t)\right| \right) \left( 1+\left|a_{K_k,N}(t)-1\right|+\left|b_{K_k,N}(t)\right| \right).  \end{equation*}
Now we conclude that \begin{equation*}\left|a_{M,N}(t)-1\right|+\left|b_{M,N}(t) \right|
\leq \left(1+\frac{3}{k}\right)^2 - 1 \leq \frac{15}{k}  \end{equation*}
and furthermore $$\sup_{\substack{M,N\in\mathbb N \\ K_k\le M<N}}\left| a_{M,N}(t)-1\right|\le\frac{15}{k}\quad\textnormal{and}\enspace\sup_{\substack{M,N\in\mathbb N \\ K_k\le M<N}}\left| b_{M,N}(t)\right|\le\frac{15}{k}, $$ which proves the convergence stated in \eqref{eq:limtoprove}, so the proof of Theorem \ref{Tmconvae1} is complete.
\end{proof}

In the proof of Theorem \ref{Tmconvae2} we need the following two lemmas. The first one is the reason why in this theorem we have the condition $q\ge 3$.

\begin{lemma}
	Let $(m_j)_{j=1}^{\infty}$ be a $q$-lacunary sequence with $q\ge 3$. Each $n\in\mathbb Z$ has at most one maximally shortened representation \begin{equation}\label{skracprikaz}
	n=\left( m_{j_1}-m_{j_2}+\cdots+m_{j_{J}}\right)-\left( m_{k_1}-m_{k_2}+\cdots+m_{k_K}\right),
	\end{equation}
	where $J,K\in\mathbb N$ are odd numbers, $j_1<j_2<\cdots<j_J$ and $k_1<k_2<\cdots<k_K$. Here by ``maximally shortened'' we mean a representation in which no further cancellation of terms within the two pairs of parentheses is possible.
\end{lemma}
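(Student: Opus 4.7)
My plan is to reduce the claim to the uniqueness of $\pm 1$-coefficient lacunary signed sums, which follows from the hypothesis $q\ge 3$ via the standard separation estimate for lacunary sequences. First I would interpret \eqref{skracprikaz} as the signed sum
\[ n=\sum_{\ell=1}^{J}(-1)^{\ell-1}m_{j_\ell}+\sum_{\ell=1}^{K}(-1)^{\ell}m_{k_\ell}, \]
and use maximal shortening to conclude that the two lists of indices can be taken disjoint: any common index appearing with positions of matching parity contributes $0$ and could be cancelled (contradicting maximality), while a common index with opposite parities would contribute $\pm 2m_i$, a case that I would rule out separately using $q\ge 3$. The representation then determines a pure $\pm 1$-signed sum $n=\sum_i\epsilon_i m_{n_i}$ with pairwise distinct indices, together with a partition of those indices into the $j$- and $k$-blocks.

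For the arithmetic uniqueness of such a $\pm 1$-combination I would subtract two candidate representations of the same $n$ to obtain a relation $\sum_k c_k m_k=0$ with $c_k\in\{-2,-1,0,1,2\}$. Letting $M$ denote the largest index with $c_M\ne 0$, the lacunary hypothesis $q\ge 3$ yields
\[ \sum_{k=1}^{M-1}m_k\le m_{M-1}\bigl(1+q^{-1}+q^{-2}+\cdots\bigr)<\frac{q}{q-1}\,m_{M-1}\le\frac{m_M}{q-1}\le\frac{m_M}{2}, \]
so $|c_M|\,m_M\le 2\sum_{k<M}m_k<m_M$, contradicting $|c_M|\ge 1$. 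Hence every $c_k$ vanishes and the two signed sums coincide as multisets of $(\mathrm{index},\mathrm{sign})$ pairs. The hypothesis $q\ge 3$ is tight here: for $q=2$ one only has $\sum_{k<M}m_k<m_M$, which is too weak to defeat the factor $|c_k|\le 2$ arising in the subtraction of two $\pm 1$-combinations.

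Finally, I would recover the tuple $(J,K,(j_\ell),(k_\ell))$ from the common signed sum by induction on the size of the support, peeling off the largest index: its sign forces it to terminate either the $j$-block (which ends with $+$ since $J$ is odd) or the $k$-block (which ends with $-$). The principal obstacle I anticipate lies in this combinatorial reassembly, particularly in handling intermediate stages where the evolving end-sign demands of the two blocks can coincide and create apparent ambiguity; resolving this should require carefully tracking the parity constraints imposed by the residual odd-block hypotheses and exploiting the disjointness forced by maximal shortening, but needs no further arithmetic input beyond the uniqueness of the signed sum just proved.
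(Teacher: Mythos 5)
There is a genuine gap, and it sits at the very first reduction. Maximal shortening only forbids a common index occupying positions of the \emph{same} parity in the two parentheses; a common index occurring with \emph{opposite} parities contributes $\pm 2m_i$ and cannot be ruled out by any argument, size-based or otherwise, because such representations genuinely occur: for example $(m_2)-(m_1-m_2+m_3)=2m_2-m_1-m_3$ is maximally shortened, with the index $2$ appearing in both parentheses. Indeed, the way the lemma is used afterwards (in Lemma~\ref{lema_ocjena_Dn-ova}) explicitly allows the set $\mathcal{S}_n^2$ of indices occurring twice in the maximally shortened representation to be nonempty. Once each representation may carry coefficients $\pm 2$, the difference of two candidate representations has coefficients in $\{-4,\dots,4\}$, and your estimate $\sum_{k<M}m_k<\tfrac12 m_M$ (correct for $q\ge 3$) only gives $|c_M|\le 1$ for the largest nonzero coefficient, not $c_M=0$; moreover, with bare coefficient bounds in $\{-2,\dots,2\}$ uniqueness of the coefficient vector is simply false (e.g.\ $2m_1+2m_2+2m_3=m_4-m_1$ for $m_j=3^j$), so the structural constraint that the signs alternate within each parenthesis cannot be discarded. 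This is precisely what the paper's proof retains: it proves, by induction on the total number of terms, that two maximally shortened representations have the same terms, comparing the largest occurring index; either that term can be cancelled against an equal term on the other side (induction), or a parity case analysis using both $q\ge3$ and the alternating structure (bounds of the type $\text{one side}>m_{j_J}-m_{j_J-1}-m_{k_K}\ge\tfrac13 m_{j_J}$ versus $\text{other side}<m_{j'_{J'}}\le\tfrac13 m_{j_J}$) yields a contradiction.

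A second, lesser issue concerns your final ``reassembly'' step: recovering the tuple $(J,K,(j_\ell),(k_\ell))$ from the signed sum is not merely delicate but impossible in general, since for instance $(m_1-m_2+m_3)-(m_4)$ and $(m_1)-(m_2-m_3+m_4)$ are distinct representations of the same integer built from identical signed terms. The uniqueness asserted by the lemma must therefore be understood up to moving terms from one pair of parentheses to the other, which is exactly the stronger statement the paper proves and exactly what the application in Lemma~\ref{lema_ocjena_Dn-ova} requires (there, the remaining two-fold freedom for indices in $\mathcal{S}_n^1$ is accounted for by the factor $2^{|\mathcal{S}^1|}$). So you should abandon the attempt to pin down the block partition and aim only at uniqueness of the signed terms; but to prove even that, the reduction to $\pm1$-sums with disjoint index lists has to be replaced by an argument that accommodates the doubled indices, which is where the alternating-sign structure and the case analysis of the paper's proof come in.
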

\begin{proof}
	We will actually prove a stronger result than the one stated in the lemma. The stronger result we are going to prove is the following. Assume that we have \begin{align}
	&\left( m_{j_1}-m_{j_2}+\cdots+(-1)^{J-1}m_{j_J}\right)-\left( m_{k_1}-m_{k_2}+\cdots+(-1)^{K-1}m_{k_K}\right)\nonumber \\
	&=\left( m_{j'_1}-m_{j'_2}+\cdots+(-1)^{J'-1}m_{j'_{J'}}\right)-\left( m_{k'_1}-m_{k'_2}+\cdots+(-1)^{K'-1}m_{k'_{K'}}\right)\label{desna i lijeva strana},
	\end{align}
	where $J,J',K,K'\in\mathbb N_0$, $J+K$ and $J'+K'$ are of the same parity and $j_1<j_2<\cdots<j_J$, $j'_1<j'_2<\cdots<j'_{J'}$, $k_1<k_2<\cdots<k_K$, $k'_1<k'_2<\cdots<k'_{K'}$. If both sides of the equality are maximally shortened, then the left and the right side have the same terms (after possible moving of the terms from one pair of parentheses to the other and after rearrangements). We prove this statement by the induction on $J+J'+K+K'\in\mathbb N_0$. The induction basis $J=J'=K=K'=0$ is trivially satisfied. For the induction step we take one above described equality and without loss of generality we may assume that then $j_J\ge 1$ is the largest number among $j_J,j'_{J'},k_K,k'_{K'}$. Notice that we cannot have  $(-1)^{J-1}m_{j_J}=(-1)^{K-1}m_{k_K}$ because in that case we could cancel these terms on the left hand side. On the other hand, if we had $(-1)^{J-1}m_{j_J}=(-1)^{J'-1}m_{j'_{J'}}$ or $(-1)^{J-1}m_{j_J}=-(-1)^{K'-1}m_{k'_{K'}}$, then the same term could be subtracted from both sides of equality \eqref{desna i lijeva strana} and we could use the inductive hypothesis. Otherwise, we discuss all possible parity combinations of the numbers $J,J',K,K'$. In every of eight possibilities we carefully estimate both the left side and the right side of \eqref{desna i lijeva strana}. In each case, using the assumption that $q\ge 3$, we get a contradiction. 
	
	For instance, if $J,K,J',\text{ and }K'$ are all odd numbers we have $k_K<j_J$ and $j'_{J'}<j_J$. Now it follows   
	\begin{align*}
		&\textnormal{LHS} > m_{j_J}-m_{j_J-1}-m_{k_K}\ge m_{j_J}-\frac{1}{3}m_{j_J}-\frac{1}{3}m_{j_J}=\frac{1}{3}m_{j_J},\\
		&\textnormal{RHS} < m_{j'_{J'}}\le\frac{1}{3}m_{j_J} ,
	\end{align*} where we denoted by LHS the left hand side of \eqref{desna i lijeva strana} and by RHS the right hand side of \eqref{desna i lijeva strana}. This, obviously, leads us to a contradiction.
\end{proof}

\begin{lemma} \label{lema_ocjena_Dn-ova}
	Let $(m_j)_{j=1}^{\infty}$ be a $q$-lacunary sequence with $q\ge 3$ and let $M,N\in\mathbb N$, $M<N$. In addition, assume that $b_{M,N}(t)=\sum_{n\in F}D_ne^{2\pi int}$, where $F\subseteq \mathbb Z$ is the set of all frequencies for which the Fourier coefficients of $b_{M,N}$ are nonzero. Then the following inequality holds:
	$$\sum_{n\in\mathbb Z}\Big|\sum_{\substack{n_1,n_2\in F\\ n_2-n_1=n}} D_{n_1}\overline{D_{n_2}}\Big|^2\le e^{8\sum_{j=M+1}^{N}|B_j|^2}.$$
\end{lemma}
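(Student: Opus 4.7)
The plan is to recognize the left-hand side as an $L^4$ integral and then bound it by induction. First I would note that the inner sum $\sum_{n_2-n_1=n}D_{n_1}\overline{D_{n_2}}$ is precisely the Fourier coefficient of $|b_{M,N}(t)|^2$ at frequency $-n$, so applying Parseval's identity to $|b_{M,N}|^2$ yields
\[\sum_{n\in\mathbb Z}\Big|\sum_{\substack{n_1,n_2\in F\\ n_2-n_1=n}}D_{n_1}\overline{D_{n_2}}\Big|^2=\bigl\||b_{M,N}|^2\bigr\|_{\textup L^2(\mathbb T)}^2=\int_{\mathbb T}|b_{M,N}(t)|^4\,\textnormal dt.\]
The task therefore reduces to establishing $\int_{\mathbb T}|b_{M,N}|^4\,\textnormal dt\le e^{8\sum_{j=M+1}^N|B_j|^2}$, which I would prove by induction on $N-M$ while jointly tracking $\int|a_{M,N}|^4$; concretely, I would carry through the stronger hypothesis
\[\int_{\mathbb T}\bigl(|a_{M,N}(t)|^4+|b_{M,N}(t)|^4\bigr)\,\textnormal dt\le e^{8\sum_{j=M+1}^N|B_j|^2},\]
with trivial base case $N=M$ since $a_{M,M}=1$ and $b_{M,M}=0$.

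For the inductive step I would use the matrix recursion to decompose $|b_{M,N}|^2$ into a real \emph{low-frequency} piece $A_N^2|b_{M,N-1}|^2+|B_N|^2|a_{M,N-1}|^2$ and an oscillatory piece $A_N\overline{B_N}e^{-2\pi im_Nt}\,b_{M,N-1}\overline{a_{M,N-1}}+\text{c.c.}$ Squaring once more and integrating, the crucial point is that for $q\ge 3$ all cross integrals between the two pieces, together with the doubly-shifted integrals $\int e^{\pm 4\pi im_Nt}(b_{M,N-1}\overline{a_{M,N-1}})^2\,\textnormal dt$, vanish by Fourier orthogonality. This leaves the clean identity
\[\int|b_{M,N}|^4=A_N^4\int|b_{M,N-1}|^4+4A_N^2|B_N|^2\int|a_{M,N-1}|^2|b_{M,N-1}|^2+|B_N|^4\int|a_{M,N-1}|^4,\]
together with its $a\leftrightarrow b$ symmetric counterpart for $\int|a_{M,N}|^4$.

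Adding the two identities, invoking AM-GM in the form $\int|a_{M,N-1}|^2|b_{M,N-1}|^2\le\tfrac12\int(|a_{M,N-1}|^4+|b_{M,N-1}|^4)$ and substituting $A_N^2=1+|B_N|^2$, the inductive multiplier simplifies to $A_N^4+|B_N|^4+4A_N^2|B_N|^2=1+6|B_N|^2+6|B_N|^4$, which is bounded above by $e^{8|B_N|^2}$ by an elementary Taylor-series comparison. Iterating from the base case then completes the induction.

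The main obstacle is pinning down the Fourier orthogonality precisely at the threshold $q=3$. Writing $b_{M,N-1}(t)=\sum_{|S|\text{ odd}}\beta_Se^{2\pi i\varphi(S)t}$ and $a_{M,N-1}(t)=\sum_{|S'|\text{ even}}\alpha_{S'}e^{2\pi i\varphi(S')t}$, a telescoping estimate on the alternating sums shows $\varphi(S)\in[m_{M+1},m_{N-1}]$ for odd $S$ and $|\varphi(S')|\le m_{N-1}-m_{M+1}$ for nonempty even $S'$; consequently the spectrum of $b_{M,N-1}\overline{a_{M,N-1}}$ is strictly contained in $(0,2m_{N-1})$. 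Combined with $m_N\ge 3m_{N-1}$, this is exactly tight enough so that the shifts by $m_N$ and $2m_N$ place the relevant cross-term products outside their own spectral support, forcing the integrals to vanish.
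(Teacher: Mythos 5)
Your proof is correct, but it takes a genuinely different route from the paper. The paper works directly with the coefficients: it invokes the preceding lemma on uniqueness of maximally shortened representations (this is where $q\ge 3$ enters there), bounds $\sum_{n_2-n_1=n}|D_{n_1}D_{n_2}|$ by a product over the partition $(\mathcal S_n^0,\mathcal S_n^1,\mathcal S_n^2)$ of $\{M+1,\dots,N\}$, counts that each partition can arise from at most $2^{|\mathcal S^1|}$ values of $n$, and sums over partitions to get $\prod_{j}\left((A_j^2+|B_j|^2)^2+2A_j^2|B_j|^2+|B_j|^4\right)\le e^{8\sum_j|B_j|^2}$. You instead observe that the left-hand side equals $\||b_{M,N}|^2\|_{\textup L^2(\mathbb T)}^2=\int_{\mathbb T}|b_{M,N}|^4$ and prove a nonlinear Khinchin-type fourth-moment bound by induction on the recursion, with $q\ge 3$ entering through the spectral separation (spectrum of $b_{M,N-1}\overline{a_{M,N-1}}$ inside $(0,2m_{N-1})$ versus the shift $m_N\ge 3m_{N-1}$) that kills the cross and doubly-shifted terms; I checked the support claims (frequencies of $b_{M,N-1}$ lie in $[m_{M+1},m_{N-1}]$, those of $a_{M,N-1}$ are nonpositive with modulus at most $m_{N-1}-m_{M+1}$, so the low-frequency part lives in modulus below $m_{N-1}-m_{M+1}$ while the oscillatory part lives above $m_{N-1}+m_{M+1}$), the resulting identity with multiplier $A_N^4+|B_N|^4+4A_N^2|B_N|^2=1+6|B_N|^2+6|B_N|^4\le e^{8|B_N|^2}$, and the AM--GM step, and all are sound. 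What each approach buys: yours bypasses the representation-uniqueness lemma entirely, is closer in spirit to the classical lacunary $\textup L^4$ estimates, and even yields a marginally sharper per-factor bound ($1+6x+6x^2$ versus the paper's $1+6x+7x^2$ before exponentiating); the paper's counting argument, on the other hand, stays purely combinatorial on the coefficient side and makes explicit which representations of each integer $n$ contribute, which is the flavor the author advertises in the introduction. The only thing you should spell out in a final write-up is the inductive description of the frequencies of $a_{M,N-1}$ and $b_{M,N-1}$ as alternating lacunary sums (the same induction that underlies Lemma~\ref{lema_diff}), since your telescoping bounds rest on it.
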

\begin{proof}
	For a fixed $n\in\mathbb Z$ we study all representations of $n$ of the form given in \eqref{skracprikaz}, i.e. \begin{equation}\label{eq:representation}
	n=n_2-n_1=\left( m_{j_1}-m_{j_2}+\cdots+m_{j_J}\right)-\left( m_{k_1}-m_{k_2}+\cdots+m_{k_K}\right),	
	\end{equation}
where $J,K\in\mathbb N$ are odd numbers, $j_1,j_2\ldots,j_J\in\mathbb N$ and $k_1,k_2,\ldots,k_K\in\mathbb N$ are such that $j_1<j_2<\cdots<j_J$ and $k_1<k_2<\cdots<k_K$ holds, but allowing the possibility that some terms cancel. Furthermore, for $l=0,1,2$ denote by $\mathcal{S}_n^l$ the set of all $M+1\le j\le N$ such that the frequency $m_j$ appears in the maximally shortened representation of $n$ exactly $l$ times. Notice that a set $\left\lbrace M+1,\ldots,N\right\rbrace$ is a disjoint union of $\mathcal{S}_n^0$, $\mathcal{S}_n^1$ and $\mathcal{S}_n^2$. Having in mind that every representation \eqref{eq:representation} $n=n_2-n_1$, $n_1,n_2\in F$, can be obtained from the unique maximally shortened representation from the previous lemma by adding terms which correspond to indices from $\mathcal{S}_n^0$, we can conclude
\begin{equation}\label{eq:sumofd-ova}
\sum_{\substack{n_1,n_2\in F\\ n_2-n_1=n}}\big|D_{n_1}D_{n_2}\big|\le\Big(\prod_{j\in\mathcal{S}_n^0}\left(A_j^2+|B_j|^2 \right)\Big) \Big(\prod_{j\in\mathcal{S}_n^1}|A_jB_j|\Big) \Big(\prod_{j\in\mathcal{S}_n^2}|B_j|^2 \Big).
\end{equation}	
Furthermore, notice that for any partition $\left( \mathcal{S}^0,\mathcal{S}^1,\mathcal{S}^2\right)$ of $\left\lbrace M+1,\ldots,N\right\rbrace$ we have at most $2^{|\mathcal{S}_n^1|}$ numbers $n\in\mathbb Z$ such that $$\left( \mathcal{S}_n^0,\mathcal{S}_n^1,\mathcal{S}_n^2\right) =\left( \mathcal{S}^0,\mathcal{S}^1,\mathcal{S}^2\right) .$$
Namely, $j\in\mathcal{S}_n^1$ means that the frequency $m_j$ has to appear either within the first or within the second pair of parentheses in the representation \eqref{eq:representation}, while everything else is uniquely determined. In particular, the signs preceding the frequencies $m_j$ are uniquely determined by the positions of those frequencies within the parentheses; this is seen by reasoning backwards, starting with the rightmost summand. Squaring inequality \eqref{eq:sumofd-ova} and summing over all $n$ that determine the same triple of sets $\mathcal{S}^0_n$, $\mathcal{S}^1_n$, and $\mathcal{S}^2_n$ we get \begin{align*}
&\sum_{\substack{n\in\mathbb Z\\ \left( \mathcal{S}^0_n,\mathcal{S}^1_n,\mathcal{S}^2_n\right) =\left( \mathcal{S}^0,\mathcal{S}^1,\mathcal{S}^2\right) }}\Bigg( \sum_{\substack{n_1,n_2\in F\\ n_2-n_1=n}} \big|D_{n_1}D_{n_2}\big|\Bigg) ^2 \\
&\qquad\qquad \le 2^{|\mathcal S^1|}\Big(\prod_{j\in\mathcal{S}^0}\left(A_j^2+|B_j|^2 \right)^2\Big) \Big(\prod_{j\in\mathcal{S}^1}A_j^2|B_j|^2\Big) \Big(\prod_{j\in\mathcal{S}^2}|B_j|^4 \Big)\\
&\qquad\qquad =\Big(\prod_{j\in\mathcal{S}^0}\left(A_j^2+|B_j|^2 \right)^2\Big) \Big(\prod_{j\in\mathcal{S}^1}2A_j^2|B_j|^2\Big) \Big(\prod_{j\in\mathcal{S}^2}|B_j|^4 \Big).\end{align*}
Finally, taking the sum over all possible choices of partitions $\left(\mathcal{S}^0,\mathcal{S}^1,\mathcal{S}^2\right)$ of $\left\lbrace M+1,\ldots,N\right\rbrace$  and using $1+x\le e^x$, for $x\in\mathbb R$,  the above inequality and the fact that  $A_j^2-|B_j|^2=1$, we get \begin{align*}
\sum_{n\in\mathbb Z}\Big|\sum_{\substack{n_1,n_2\in F\\ n_2-n_1=n}} D_{n_1}\overline{D_{n_2}}\Big|^2&\le\sum_{\left( \mathcal{S}^0,\mathcal{S}^1,\mathcal{S}^2\right)}\sum_{\substack{n\in\mathbb Z\\ \left( \mathcal{S}^0_n,\mathcal{S}^1_n,\mathcal{S}^2_n\right) =\left( \mathcal{S}^0,\mathcal{S}^1,\mathcal{S}^2\right) }}\Bigg( \sum_{\substack{n_1,n_2\in F\\ n_2-n_1=n}} \big|D_{n_1}D_{n_2}\big|\Bigg) ^2\\
&\le\prod_{j=M+1}^{N}\left(\left(A_j^2+|B_j|^2 \right)^2+2A_j^2|B_j|^2+|B_j|^4 \right) \\
%&\le\prod_{j=M+1}^{N}\left(A_j^4+6A_j^2|B_j|^2\right) \\
&\le \prod_{j=M+1}^{N}\left(A_j^2\left(A_j^2+6|B_j|^2 \right)\right) \\
&= \bigg(\prod_{j=M+1}^{N}\left(1+|B_j|^2 \right)\bigg) \bigg(\prod_{j=M+1}^{N}\left(1+7|B_j|^2\right)\bigg) \\[0.2cm] 
&\le e^{8\sum_{j=M+1}^{N}|B_j|^2}.\qedhere
\end{align*}	
\end{proof}
Now we are ready to prove Theorem \ref{Tmconvae2}. Our proof was inspired by Zygmund's proof of the linear version of this result \cite{Z30}. 
\begin{proof}[Proof of Theorem~\ref{Tmconvae2}]
The proof starts by determining a set $E\subseteq\mathbb T$ of positive measure having property that \begin{equation}\label{limsupb_MN}
\lim_{M,N\to\infty}\sup_{t\in E}\left|b_{M,N}(t) \right|=0. 
\end{equation}
This is a quite standard argument in measure theory, but we give the details for completeness. According to the assumption of the theorem we know that the sequences $(a_N)_{N\in\mathbb N}$ and $(b_N)_{N\in\mathbb N}$ converge pointwise on some set $Z\subseteq\mathbb T$ of positive measure. By Egorov's theorem  these sequences also converge uniformly on some subset $E\subseteq Z$ that still has positive measure. Even more, we can achieve that these sequences are uniformly bounded on the set $E$. Namely, because of the uniform boundedness and the uniform convergence of $a_N$ i $b_N$ on $E$ we have  \[\lim_{N\to\infty}\sup_{t\in E}|a_N(t)|<\infty,\enspace\lim_{N\to\infty}\sup_{t\in E}|b_N(t)|<\infty,\enspace\sup_{t\in E}|a(t)|<\infty,\enspace\sup_{t\in E}|b(t)|<\infty,\]   \[\lim_{N\to\infty}\sup_{t\in E}\left| a_N(t)-a(t)\right|=0,\enspace\lim_{N\to\infty}\sup_{t\in E}\left| b_N(t)-b(t)\right|=0.\]
Put $b(t):= \lim_{N\to\infty}b_N(t)$ and $a(t):= \lim_{N\to\infty}a_N(t)$, for $t\in E$. We know that \begin{align}\label{eq:limsupofbmnless}
\sup_{t\in E}\left| b_{M,N}(t)\right| &\le\sup_{t\in E}\left| a_M(t)-a(t)\right|\sup_{t\in E}\left| b_N(t)\right| +\sup_{t\in E}\left| a(t)\right| \sup_{t\in E}\left| b_N(t)-b(t) \right| \nonumber\\
&\quad +\sup_{t\in E}\left| b_M(t)\right| \sup_{t\in E}\left|a_N(t)-a(t)\right| +\sup_{t\in E}\left| a(t)\right| \sup_{t\in E}\left| b_M(t)-b(t) \right|,
\end{align} so by letting $M,N\to\infty$ in \eqref{eq:limsupofbmnless} we get \eqref{limsupb_MN}. 

As before, put $b_{M,N}(t)=\sum_{n\in F}D_ne^{2\pi int}$, where $F\subseteq \mathbb Z$ is the set of all frequencies for which the Fourier coefficients of $b_{M,N}$ are not equal to zero. We have  \begin{equation*}
\int_{E} \left|b_{M,N}(t) \right|^2\textnormal dt=|E|\sum_{n\in F}|D_n|^2+\sum_{\substack{n_1,n_2\in F\\n_1\neq n_2}} D_{n_1}\overline{D_{n_2}}\int_E e^{2\pi i(n_1-n_2)t}\textnormal dt .
\end{equation*}
For the first term in this sum we get \begin{equation*}
|E|\sum_{n\in F}|D_n|^2\ge |E|\sum_{j=M+1}^{N}|B_j|^2\prod_{\substack{M<k\le N\\ k\neq j}}A_k^2\ge |E|\sum_{j=M+1}^{N}|B_j|^2
\end{equation*} and, consequently, it follows that
\begin{equation}\label{integralb_mn}
|E|\sum_{j=M+1}^{N}|B_j|^2\le \int_{E} \left|b_{M,N}(t) \right|^2\textnormal dt+\Bigg|\sum_{\substack{n_1,n_2\in F\\n_1\neq n_2}} D_{n_1}\overline{D_{n_2}}\int_E e^{2\pi i(n_1-n_2)t}\textnormal dt\Bigg| .
\end{equation}
Now, for the second term in \eqref{integralb_mn}, by using the Cauchy--Schwarz inequality and Lemma \ref{lema_diff}, we have \begin{align}
&\Bigg| \sum_{\substack{n_1,n_2\in F\\n_1\neq n_2}} D_{n_1}\overline{D_{n_2}}\int_E e^{2\pi i(n_1-n_2)t}\textnormal dt\Bigg|=\Bigg|\sum_{\substack{n\in\mathbb Z\\|n|\ge m_{M+1}}} \Big( \sum_{\substack{n_1,n_2\in F\\ n_2-n_1=n}} D_{n_1}\overline{D_{n_2}}\Big) \int_E e^{-2\pi int}\textnormal dt\Bigg|   \nonumber \\ 
&\le \bigg(\sum_{n\in\mathbb Z}\Big|\sum_{\substack{n_1,n_2\in F\\ n_2-n_1=n}} D_{n_1}\overline{D_{n_2}}\Big|^2\bigg)^{\frac{1}{2}} \bigg(\sum_{\substack{n\in\mathbb Z\\ |n|\ge m_{M+1}}}\Big| \int_E e^{-2\pi int}\textnormal dt\Big|^2\bigg)^{\frac{1}{2}} \label{drugi_pribrojnik}.
\end{align}
Parseval's identity, applied to $1_E$, the characteristic function of the set $E$, implies $$\sum_{n\in\mathbb Z}\Big| \int_E e^{-2\pi int}\textnormal dt\Big|^2=\sum_{n\in\mathbb Z}\Big| \int_{\mathbb T} 1_E(t)e^{-2\pi int}\textnormal dt\Big|^2=\left\|1_E \right\|_{\textup L^2(\mathbb T)}^2=|E|<\infty.$$
So, in particular, for the second factor in \eqref{drugi_pribrojnik} we get \begin{equation}\label{lim_integrala}
\lim_{M\to\infty} \sum_{\substack{n\in\mathbb Z\\ |n|\ge m_{M+1}}}\Big| \int_E e^{-2\pi int}\textnormal dt\Big|^2 =0.
\end{equation}

Assume that $\sum_{j=1}^{\infty}|B_j|^2=\infty.$ Having in mind the equivalent conditions for having $\ell^p$ coefficients stated in the introduction, this actually means that we assumed the opposite of the desired conclusion, i.e.\ that our infinite product does not have $\ell^2$ coefficients. Furthermore, due to completeness, convergence at a single point $t\in E\subseteq Z$ implies that for that point \eqref{eq:limtoprove} holds. For $M=j-1$ and $N=j$ this implies $\lim_{j\to\infty}|B_j|=0$. As in the proof of Theorem \ref{Tmconvmetric}, we can easily find strictly increasing sequences of indices $(M_k)_{k\in\mathbb N}$ and $(N_k)_{k\in\mathbb N}$ such that \begin{equation} \label{eq:lastintheproof}
\lim_{k\rightarrow\infty}\sum_{j=M_k+1}^{N_k}|B_j|^2=1.\end{equation}
Inequalities \eqref{integralb_mn} and \eqref{drugi_pribrojnik} and Lemma \ref{lema_ocjena_Dn-ova} imply 
\begin{equation*}
|E|\sum_{j=M_k+1}^{N_k}|B_j|^2\le |E|\left( \sup_{t\in E }\left|b_{M_k,N_k}(t)\right|\right)^2 +e^{4\sum_{j=M_k+1}^{N_k}|B_j|^2}\bigg(\sum_{\substack{n\in\mathbb Z\\ |n|\ge m_{M_k+1}}}\Big| \int_E e^{-2\pi int}\textnormal dt\Big|^2\bigg)^{\frac{1}{2}}.
\end{equation*}
Letting $k\to\infty$ in the above inequality, from \eqref{limsupb_MN}, \eqref{lim_integrala}, and \eqref{eq:lastintheproof} we get $|E|\le 0$, which is a contradiction. This proves Theorem \ref{Tmconvae2}.
\end{proof}
%%%%%%%%%%

\section*{Acknowledgments}
This work was supported in part by the Croatian Science Foundation under the project UIP-2017-05-4129 (MUNHANAP). The author would like to thank her supervisor Vjekoslav Kova\v{c} for his help and useful suggestions.

%%%%%%%%%%%%%%%%%%%%%%%%%%%%%%%%%%%%%%%%%%%%%%%%

\bibliographystyle{alpha}

\end{document}